\documentclass[pdf, 10pt]{amsart}

\usepackage{amsthm, amsmath, amssymb}
\usepackage[OT1]{fontenc}
\usepackage{color}
\usepackage{dsfont}

\numberwithin{equation}{section}

\newtheorem{thm}[equation]{Theorem}

\newtheorem{lem}[equation]{Lemma}

\def\R{{\mathbb{R}}}

\def\<{\langle}
\def\>{\rangle}
\def\Q{{\bf Q}}
\def\P{{\bf P}}
\def\S{{\bf S}}
\def\T{{\bf T}}

\DeclareMathAlphabet{\mathpzc}{OT1}{pzc}{m}{it}

\renewcommand{\atop}[2]{\substack{{#1}\\{#2}}}

\setlength{\textwidth}{15 cm} \setlength{\evensidemargin}{1cm} \setlength{\oddsidemargin}{1cm}

\begin{document}

\title[A multilinear local $T(b)$ theorem] {A local $T(b)$ theorem for perfect multilinear Calder\'{o}n--Zygmund operators}

\author{Mariusz Mirek}
\address{M. Mirek \\
	Universit\"{a}t Bonn \\
	Mathematical Institute\\
	Endenicher Allee 60\\
	D--53115 Bonn \\
	Germany \&
	Instytut Matematyczny\\
	Uniwersytet Wroc{\l}awski\\
	Plac Grunwaldzki 2/4\\
	50--384 Wroc{\l}aw\\
	Poland}
 \email{mirek@math.uni-bonn.de}

\author{Christoph Thiele}
\address{
	C. Thiele\\
	Universit\"{a}t Bonn \\
	Mathematical Institute\\
	Endenicher Allee 60\\
	D--53115 Bonn \\
	Germany \& University of California, Los Angeles, Dep. of Mathematics, Los Angeles, CA 90095-1555, USA}
\email{thiele@math.uni-bonn.de}

\thanks{
M. Mirek was supported by the Hausdorff Center for Mathematics and
NCN grant DEC--2012/05/D/ST1/00053. 
C. Thiele was supported by the NSF grant DMS-1001535 and the Hausdorff
Center for Mathematics
}

\begin{abstract} We prove a multilinear local $T(b)$ theorem that
  differs from previously considered multilinear local $T(b)$ theorems
  in using exclusively general testing functions $b$ as opposed to a
  mix of general testing functions and indicator functions. The main
  new feature is a set of relations between the various testing
  functions $b$ that to our knowledge has not been observed in the
  literature and is necessitated by our approach. For simplicity we
  restrict attention to the perfect dyadic model.
\end{abstract}

\maketitle

\section{Introduction}

The theory of $T(1)$ and $T(b)$ theorems was started in the 1980's
papers \cite{DJ} and \cite{DJS} as a push to develop a general theory
of Calder\'on--Zygmund operators applicable for example in the
investigation of the Cauchy integral on Lipschitz curves. The first
{\it local} $T(b)$ theorem appears in Christ's paper \cite{C} with
applications to analytic capacity.  In recent years, the idea of {\it
  testing} that lies behind $T(1)$ and $T(b)$ theorems has become
influential in a wider array of topics related to singular integrals
such as for example sharp weighted estimates.

The topic of multilinear $T(1)$ theorems was discussed in the
companion papers \cite{GT},\cite{GT2}. More recently multilinear local
$T(b)$ theorems have been studied in \cite{GHO}. There a certain
square function is tested with general testing functions $b$, while
the dual operators to the operator in question are still tested with
characteristic functions $1$.  A {\it global} bilinear T(b) theorem
with testing functions $b$ throughout appears in \cite{Hart}.

In this paper we propose a multilinear local $T(b)$ theorem which only
tests with general testing functions $b$. This level of generality
appears to force a set of explicit constraints between the various
testing functions $b$, a phenomenon  which we did not find discussed in
the literature.  Clarification of the precise nature of these
constraints was a motivation for the present paper, as we encountered
the possibility of such constraints in the similar but more
complicated context of entangled operators in \cite{KT}, where as of
yet we have been unable to clarify the nature of an envisioned local
$T(b)$ theorem.

For simplicity we restrict attention to the perfect
Calder\'on--Zygmund setting discussed in \cite{AHMTT}. To gain
efficiency from symmetry we discuss multilinear forms which are dual
to multilinear operators. A dyadic cube in $\R^d$ is a cube whose
sides are dyadic intervals, that is intervals of the form $[2^m l,
2^m(l+1))$ with integers $m,l$.  A dyadic test function is a finite
linear combination of characteristic functions of dyadic cubes. An
$n$-linear form $\Lambda$ mapping $n$-tuples of dyadic test functions
to the set of real numbers is called a perfect Calder\'on--Zygmund
form if it satisfies the following three conditions:
\begin{enumerate}
\item[(i)] {\it Dyadic decay:} If each of the dyadic test function
  $f_1,\dots, f_n$ is supported on the same dyadic cube $P$, and if in
  addition two of these functions are supported on different dyadic
  children of $P$, where a dyadic child means a dyadic subcube of half
  of the sidelength, then
\begin{equation}\label{decay}
|\Lambda(f_1,\dots ,f_n)|\le \prod_{j=1}^n\|f_j\|_{p_j},
\end{equation}
where $(p_j)_{1\le j\le n}$ is any H\"older tuple of finite exponents,
that is $1<p_j< \infty$ and $\sum_{j=1}^n 1/p_j=1$.  This condition is
a dyadic version of standard pointwise decay estimates for
Calder\'on-Zygmund operators away from the diagonal.
\item[(ii)] {\it Perfect smoothness:} If one of the functions $f_j$ is
  supported on some dyadic cube $P$ and has mean zero, and if another
  one of the functions vanishes on that cube, then
\begin{equation}\label{smoothness}
\Lambda(f_1,\dots ,f_n)=0.
\end{equation}
This condition is a very strong dyadic version of standard decay estimates 
for derivatives of Calder\'on-Zygmund operators away from the diagonal.

\item[(iii)] {\it Qualitative truncation:} The integral kernel of the
  form $\Lambda$ is a dyadic test function in $\R^{dn}$. This
  condition is a dyadic version of standard truncation assumptions on
  Calder\'on-Zygmund operators, which are used to give sense to
  explicit integral formulas for Calder\'on-Zygmund operators but
  which do not usually enter a priori bounds for these operators in a
  quantitative way.

\end{enumerate}

Generally the idea behind a local $T(1)$ or $T(b)$ theorem is that a
H\"older estimate for an $n$-linear Calder\'on--Zygmund form can be
deduced from validity of the desired estimate for a very restricted
set of testing tuples of functions. We recall the perfect multilinear
local $T(1)$ theorem.

\begin{thm}[Perfect multilinear local $T(1)$ theorem]\label{perfectt1}
  Let $d\ge 1$ and $n\ge 2$ and let $\Lambda$ be a perfect $n$-linear
  Calder\'on--Zygmund form in $\R^d$. Let $1<p_j<\infty$ be a H\"older
  tupe of exponents.  Assume there is a constant $B\ge 1$ such that
$$
|\Lambda(f_1,\dots, f_{n})|\le B \prod_{j=1}^{n}\|f_j\|_{p_j}
$$
for all dyadic cubes $P$ and all tuples $(f_j)_{1\le j\le n}$ 
of functions such that all but one of the functions in this tuple
are the characteristic function of $P$ while the remaining function
is an arbitrary dyadic test function supported on $P$.

Then for some constant $C$ depending on $d$, $n$, $B$, and the tuple
$(p_j)_{1\le j\le n}$, we have
$$
|\Lambda(f_1,\dots, f_{n})|\le C \prod_{j=1}^{n}\|f_j\|_{p_j}
$$
for all $n$-tuples of dyadic test functions $(f_j)_{1\le j\le n}$.
\end{thm}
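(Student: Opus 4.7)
The plan is to expand each input via the dyadic martingale decomposition, exploit perfect smoothness (ii) to collapse the multi-scale sum to a diagonal over single cubes, and then invoke the testing hypothesis on the diagonal terms, reducing the bound to a standard dyadic paraproduct estimate. Qualitative truncation (iii) guarantees that every sum below is finite a priori, so no convergence issues arise.

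First, I would write $f_j=\sum_Q D_Q f_j$, where $D_Q f_j$ is supported on $Q$, is constant on each dyadic child of $Q$, and has mean zero on $Q$. Expanding $\Lambda$ multilinearly gives
\begin{equation*}
\Lambda(f_1,\dots,f_n)=\sum_{Q_1,\dots,Q_n}\Lambda(D_{Q_1}f_1,\dots,D_{Q_n}f_n).
\end{equation*}
Since $D_{Q_i}f_i$ has mean zero on $Q_i$, perfect smoothness forces every other $D_{Q_k}f_k$ not to vanish identically on $Q_i$; as $D_{Q_k}f_k$ is supported on $Q_k$, this means the cubes $Q_1,\dots,Q_n$ must form a nested chain.

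Second, let $Q=\min\{Q_1,\dots,Q_n\}$ be the minimal cube in this chain. I would split into the \emph{generic} part, in which a unique index $j$ attains this minimum, and the \emph{collision} part, in which two or more indices do. In the generic part, the functions $D_{Q_i}f_i$ for $i\neq j$ and $Q_i\supsetneq Q$ are constant on $Q$, and summing them over strict ancestors telescopes to $\langle f_i\rangle_Q\mathbf{1}_Q$. Factoring these scalar averages out by multilinearity and invoking the testing hypothesis on the remaining input $D_Qf_j$ yields
\begin{equation*}
\bigl|\Lambda\bigl(\langle f_1\rangle_Q\mathbf{1}_Q,\dots,D_Qf_j,\dots,\langle f_n\rangle_Q\mathbf{1}_Q\bigr)\bigr|\le B\Bigl(\prod_{i\neq j}|\langle f_i\rangle_Q|\Bigr)\|D_Qf_j\|_{p_j}|Q|^{1-1/p_j}.
\end{equation*}
The collision terms, carrying two or more mean-zero inputs at the same scale $Q$, are bounded by dyadic decay (i) and summed using that each additional mean-zero input contributes an extra Carleson-summable factor via the dyadic square function bound.

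Summing over $(Q,j)$ reduces matters to the dyadic multilinear paraproduct bound
\begin{equation*}
\sum_Q\Bigl(\prod_{i\neq j}|\langle f_i\rangle_Q|\,|Q|^{1/p_i}\Bigr)\|D_Qf_j\|_{p_j}\lesssim\prod_{i=1}^n\|f_i\|_{p_i}
\end{equation*}
for each fixed $j$. I expect this step to be the main technical obstacle: a naive H\"older in the sum over $Q$ fails because $\sum_Q|\langle f_i\rangle_Q|^{p_i}|Q|$ can already diverge for $f_i=\mathbf{1}_{[0,1]}$. The correct argument runs a stopping-time decomposition on the $n-1$ averaging factors and couples it with the dyadic Hardy--Littlewood maximal bound $\|Mf_i\|_{p_i}\lesssim\|f_i\|_{p_i}$ and the dyadic Littlewood--Paley square function bound for $f_j$, closing via H\"older in the tuple $(p_i)$.
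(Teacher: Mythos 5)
First, a point of reference: the paper does not actually prove Theorem \ref{perfectt1}. It records the statement as folklore and points to \cite{KT}, using it only as the base case $k=1$ of the induction that establishes Theorem \ref{perfecttb}. So your argument has to stand on its own, and as written it does not close. The martingale decomposition, the use of perfect smoothness to restrict to nested chains, and the telescoping of the non-minimal entries to $\langle f_i\rangle_Q\mathds{1}_Q$ are all fine. The gap is the final reduction: the ``dyadic multilinear paraproduct bound'' you reduce to is false. Take $d=1$, $n=2$, $p_1=p_2=2$, $j=2$, $f_1=\mathds{1}_{[0,1]}$, and build $f_2=\sum_{Q}D_Qf_2$, the sum running over the dyadic $Q\subseteq[0,1]$ with $|Q|\ge 2^{-N}$, with Haar coefficients chosen so that $\|D_Qf_2\|_2^2=|Q|/(N+1)$ for each such $Q$. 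Then $\|f_2\|_2=1$ by orthogonality, while
$$\sum_{Q}|\langle f_1\rangle_Q|\,|Q|^{1/2}\|D_Qf_2\|_2\;\ge\;(N+1)^{-1/2}\sum_{Q\subseteq[0,1],\,|Q|\ge 2^{-N}}|Q|\;=\;(N+1)^{1/2}.$$
No stopping time on the averaging factors can repair this: here $f_1$ is an indicator, all its dyadic averages and its maximal function are identically $1$, and the divergence comes entirely from the single factor $\|D_Qf_j\|_{p_j}$ being spread over $N$ scales. A sum over cubes of a product of averages against a \emph{single} martingale-difference factor is never summable in this way; one needs at least two square-function-type factors per cube, which is exactly the shape of every term in Lemmas \ref{firststandardlemma} and \ref{secondstandardlemma} of the paper. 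For the same reason your collision terms, which carry two mean-zero factors at the same scale, are actually the easy part.

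The conceptual error is that you apply the testing hypothesis to each individual difference $D_Qf_j$ and then sum absolute values, discarding all cancellation across scales. The testing hypothesis is strictly stronger than the family of term-by-term bounds $|\Lambda(\mathds{1}_Q,\dots,D_Qf_j,\dots)|\le B\|D_Qf_j\|_{p_j}|Q|^{1-1/p_j}$: applied to $g=f_j\mathds{1}_R$, which is a sum of many martingale differences, it encodes a Carleson-measure condition on the coefficients $\Lambda(\mathds{1}_Q,\dots,D_Qf_j,\dots)$, and it is that Carleson condition which makes the generic part summable. A correct proof must therefore keep the signed quantities together. Two standard routes: (a) use the testing condition plus a stopping-time construction to show that, off an exceptional union of cubes of proportionally small measure, the coefficients $\Lambda(\mathds{1}_Q,\dots,D_Qf_j,\dots)$ satisfy a Carleson packing condition, and then sum by Carleson embedding; or (b) run the self-improving scheme used in the paper's induction step, namely fix an extremizing tuple, perform a stopping time, and derive an inequality of the form $A\le(1-\varepsilon)A+C$ for the best constant $A$. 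Until the generic part is handled by one of these mechanisms rather than by termwise absolute values, the proof has a genuine gap.
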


This theorem has been folklore in the field for some time, a continuous 
version of a
multilinear $T(1)$ theorem appears in \cite{GT}, and a proof
of Theorem \ref{perfectt1} can deduced 
from a similar theorem in \cite{KT}.

A $T(b)$ theorem is a variant of the $T(1)$ theorem, where the
characteristic functions of a cube $P$ are replaced by more general
functions $b$ which also have mean one on the cube $P$. In the present
paper we illustrate a multilinear local $T(b)$ theorem with a natural
set of interdependencies between the various functions $b$. To describe
these interdependencies we need some formal setup.

Let $I_n$ denote the set of integers $m$ with $1\le m \le n$.
A path in $I_n$ of length $k$ with $1\le k\le n$ is an injective 
mapping $\sigma: I_k\to I_n$.

We say that a collection $\Sigma$ of paths in $I_n$ is admissible if it
satisfies the following conditions:
\begin{enumerate}
\item For each $j\in I_n$ there is a path $\sigma\in \Sigma$ of length one with
  $\sigma(1)=j$.

\item For each path $\tilde{\sigma}\in \Sigma$ of any length $k<n$ 
  there is a path $\sigma\in \Sigma$ of length $k+1$ whose
  restriction to $I_k$ coincides with $\tilde{\sigma}$.

\item For each path $\sigma\in \Sigma$ of any length $k\ge 2$ there is a path $\tau\in \Sigma$ of the same length $k$ 
  which coincides with $\sigma$ on the set $I_{k-2}$ and satisfies
  $\sigma(k-1)=\tau(k)$ and $\tau(k-1)=\sigma(k)$.
\end{enumerate}

After stating Theorem \ref{perfecttb} below we give a fairly minimal example of an
admissible collection.  Let $\sigma$ be a path of length $k\le n$. We
say that an $n$-tuple $\Q$ of dyadic cubes is $\sigma$-nested if for
all $1\le i<j\le k$ we have
$$Q_{\sigma(i)}\supseteq Q_{\sigma(j)}$$
and whenever $s\in I_n$ is not in the range of $\sigma$ we have
$$Q_s= Q_{\sigma(k)};$$
in the case $k=n$ we additionally require that
$Q_{\sigma(n-1)}=Q_{\sigma(n)}$.
 
We now state the main new theorem in this paper.

\begin{thm}[Perfect multilinear local $T(b)$
  theorem] \label{perfecttb} Let $k\ge 1$ 
and  $n\ge \min(k,2)$ be integers.  Let $\Lambda$ be an $n$-linear form acting on
  $n$-tuples of dyadic test functions on the real line and being
  associated with a perfect Calder\'on--Zygmund kernel. Let
  $(p_j)_{1\le j\le n}$ be a H\"older-tuple of exponents.

  Assume we are given an admissible collection $\Sigma$ of paths and assume for
  each path $\sigma\in \Sigma$ of length $k$ and each
  $\sigma$-nested tuple $\Q$ of dyadic cubes, and each $j\in
  \sigma(I_{k-1})$ we are given a function $b_{\sigma, \Q, j}$, so
  that for some constant $B\ge 1$ the following properties are satisfied:
 \newline {\it
    Support condition:}
\begin{equation}\label{support}
{\rm supp}(b_{\sigma, \Q, j})\subseteq Q_{j}. 
\end{equation}
{\it Mean condition:}
\begin{equation}\label{mean} 
\int b_{\sigma,\Q,j}=|Q_j|.
\end{equation}
{\it Norm bound condition:} 
\begin{equation}\label{normbound}
\int |b_{\sigma,\Q,j}|^{p_j} \le B |Q_{j}|.
\end{equation}
{\it Interdependence condition:} If $\sigma', \Q'$ and $1\le j<k$ are
such that we have for all 
$1\le l\le j$  $\sigma(l)=\sigma'(l)$
and $Q(l)=Q'(l)$, then we have
\begin{equation}\label{interdependence}
b_{\sigma , \Q, j}=b_{\sigma', \Q', j}.
\end{equation}
{\it Testing condition:} For all dyadic test functions
$g$ supported on $Q_{\sigma(k)}$ we have 
\begin{equation}\label{testing}
|\Lambda(f_1,\dots, f_{n})|\le B \prod_{j=1}^{n}\|f_j\|_{p_j},
\end{equation}
where $f_{\sigma(k)}=g$ and $f_{\sigma(l)}=\mathds{1}_{Q_{\sigma(k)}}
b_{\sigma,\Q,\sigma(l)}$ for $l<k$ and $f_{s}=\mathds{1}_{Q_{\sigma(k)}}$ for
$s$ which is not in the range of $\sigma$.

Then for some constant $C$ depending on $n$, $d$, the constant $B$,
and the H\"older tuple $(p_j)_{1\le j\le n}$, we have
\begin{equation}\label{multilinearcbound}
|\Lambda(f_1,\dots, f_{n})|\le C \prod_{j=1}^{n}\|f_j\|_{p_j}
\end{equation}
for any $n$-tuple $(f_j)_{1\le j\le n}$ of dyadic test functions.
\end{thm}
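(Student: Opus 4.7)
The plan is to induct downward on $k$ and reduce to the perfect local $T(1)$ theorem, Theorem~\ref{perfectt1}. The base case $k=1$ is essentially immediate: a length-one path $\sigma$ with $\sigma(1)=j$ forces a $\sigma$-nested tuple to satisfy $Q_s=Q_{\sigma(1)}$ for all $s$, so the testing condition (\ref{testing}) reads $|\Lambda(f_1,\dots,f_n)|\le B\prod_j\|f_j\|_{p_j}$ whenever $f_j=g$ is an arbitrary dyadic test function on a cube $Q$ and $f_s=\mathds{1}_{Q}$ for $s\neq j$. Admissibility~(1) guarantees such a path exists for every $j\in I_n$, which is precisely the hypothesis of Theorem~\ref{perfectt1}. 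The remaining work is the inductive step $k\ge 2$: assuming the conclusion under a length-$(k-1)$ hypothesis, show that a length-$k$ hypothesis can be replaced by a length-$(k-1)$ hypothesis with a new constant $B'$ depending only on $B$, $n$, $d$, and $(p_j)$.

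For the inductive step, I would fix a length-$(k-1)$ path $\rho\in\Sigma$, a $\rho$-nested tuple $\Q$, and a test function $g$ on $R:=Q_{\rho(k-1)}$, and estimate $\Lambda(F)$ for the tuple $F$ appearing in the length-$(k-1)$ testing. Admissibility~(2) extends $\rho$ to a length-$k$ path $\sigma\in\Sigma$; set $j:=\sigma(k)$. Admissibility~(3) applied to $\sigma$ supplies a length-$k$ path $\sigma''\in\Sigma$ that agrees with $\sigma$ on $I_{k-2}$ but swaps the last two entries, so $\sigma''(k-1)=j$ and $\sigma''(k)=\rho(k-1)$. Let $\Q''$ be the $\sigma''$-nested tuple with $Q''_{\sigma(l)}=Q_{\rho(l)}$ for $l\le k-2$ and $Q''_{\sigma''(k-1)}=Q''_{\sigma''(k)}=R$. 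Applying the length-$k$ testing at $(\sigma'',\Q'')$ with free function $g$ controls the form on the tuple $G$ that carries $g$ in position $\rho(k-1)$, the function $\tilde b:=b_{\sigma'',\Q'',j}$ in position $j$, and, by interdependence (\ref{interdependence}), the original functions $b_{\rho,\Q,\rho(l)}$ in positions $\rho(l)$ for $l<k-1$.

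To compare $F$ with $G$, I would write $\mathds{1}_{R}=\tilde b+(\mathds{1}_{R}-\tilde b)$ in position $j$ and split $\Lambda(F)=\Lambda(G)+\Lambda(H)$. The first term is bounded directly by the length-$k$ testing combined with the norm bound (\ref{normbound}) on $\tilde b$. The remainder $H$ agrees with $F$ except in position $j$, where $H_j=\mathds{1}_{R}-\tilde b$ is, by (\ref{support}) and (\ref{mean}), supported on $R$ and of mean zero there.

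The main obstacle is bounding $\Lambda(H)$: perfect smoothness (\ref{smoothness}) is not directly applicable because no other coordinate of $H$ vanishes on $R$. My plan is a martingale / stopping-time decomposition of $H_j$ inside $R$, peeling off pieces scale by scale. At each dyadic descendant $S\subseteq R$ the same swapped construction is available with $Q''_{\sigma''(k-1)}=Q''_{\sigma''(k)}=S$, producing a new $\tilde b_S:=b_{\sigma'',\Q''_S,j}$ with support in $S$, mean $|S|$, and norm bound $\int|\tilde b_S|^{p_j}\le B|S|$; interdependence keeps the $b$'s in positions $\rho(1),\dots,\rho(k-2)$ unchanged as $S$ shrinks, so the rebuilt tuples remain length-$k$ admissible at every scale. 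Dyadic decay (\ref{decay}) handles contributions in which pieces land on distinct children of a cube, while the uniform family $\{\tilde b_S\}$ supplies a Carleson-type accretive system controlling the geometric series produced by the iteration; qualitative truncation~(iii) guarantees that only finitely many scales contribute. Assembling these estimates yields the length-$(k-1)$ testing hypothesis with the desired constant $B'$, closing the induction. I expect the bookkeeping for this Carleson embedding --- ensuring that the iteration converges with bounds depending only on $B$, $n$, $d$, and $(p_j)$ --- to be the technical crux, and that interdependence is exactly the structural ingredient that makes the scale-induction coherent.
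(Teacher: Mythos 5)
Your induction skeleton is the same as the paper's (the paper runs it upward from $k$ to $k+1$, but the reduction is identical: use admissibility (2),(3) to produce the extension and the swapped path, restrict the longer system to a shorter one via the interdependence condition \eqref{interdependence}, verify the shorter testing condition with a new constant, and invoke the inductive hypothesis), and your base case and your identification of the swapped path $\sigma''$ and of $\tilde b$ are correct. But the proposal stops exactly where the proof begins. Writing $\mathds{1}_R=\tilde b+(\mathds{1}_R-\tilde b)$ and noting that the remainder is mean zero is the easy part; the entire content of the theorem is the quantitative bound on $\Lambda(H)$, and the mechanism you sketch for it does not work as described. The first concrete problem: you never introduce the best constant $A$ in the shorter testing inequality (finite only because of qualitative truncation) and the self-improving estimate $A\le(1-\varepsilon)^{1/p_k}A+C$, which the paper extracts from a stopping-time decomposition whose stopping cubes cover at most $(1-\varepsilon)|Q|$ of the cube (see \eqref{firststopping} and the absorption step following \eqref{multilineartildeabound}). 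Without this bootstrap there is no reason your scale-by-scale iteration produces a convergent geometric series, and your fallback --- that qualitative truncation ``guarantees that only finitely many scales contribute'' --- is inadmissible: truncation may only be used to guarantee a priori finiteness of constants, never to bound them, since the final constant must be uniform in the truncation parameter $N$.

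The second problem is that pseudo-accretivity of the family $\{\tilde b_S\}$ genuinely fails on a nonempty family of cubes: the $b$'s are controlled only in $L^{p_j}$ with mean $|S|$, so there are cubes on which $|[u]_P|$ or $|[v]_S|$ is arbitrarily small and one cannot divide by the average of the testing function. The paper devotes the collections $\P_4,\P_5,\S_4,\S_5$, the pruned functions $\mathfrak g$ and $\mathfrak h$ in \eqref{ftildeexpansion} and \eqref{htildeexpansion}, the parent/sibling corrections \eqref{bufferterms}, and a second bootstrap constant $A'$ in \eqref{multilinearrbound} precisely to this degeneracy, and even then must finish with the telescoping paraproduct decomposition \eqref{bparaproducts1}--\eqref{bparaproducts4} and the Carleson-embedding estimates of Lemmas \ref{firststandardlemma} and \ref{secondstandardlemma}. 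None of this is present in, or replaceable by, the sentence about ``a Carleson-type accretive system controlling the geometric series.'' So: correct setup of the induction and of the role of admissibility and interdependence, but a genuine gap at the analytic core of the argument.
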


Note that the case $k=1$ of the local $T(b)$ theorem is the same as
the local $T(1)$ theorem. The strength of the theorem strictly
increases in $k$, as one can deduce the theorem for lower values of
$k$ by specializing some functions $b_{\sigma, \Q, j}$ to
characteristic functions. The case $k=n$ is the one of main
interest. We choose to introduce the parameter $k$ so as to induct on
it.

As an example of an admissible collection of paths, consider 
the collection of all paths that satisfy the following two properties:
\begin{enumerate}
\item[(i)]
The range of a path of length $k$ contains $I_{k-1}$.
\item[(ii)] If $j\le k$, then $I_j$ contains at least $j-1$
elements of the image of $I_j$ under the path.
\end{enumerate}
To see that this collection is admissible, first note that
the collection contains all paths of length one since conditions
(i) and (ii) are void for paths of length one. Hence the collection
satisfies (1).
Let $\tilde{\sigma}$ be a path of length $k<n$ in the collection.
Set $\sigma$ the path extending $\tilde{\sigma}$ by $\sigma(k+1)$
being the minimal element not in the range of $\tilde{\sigma}$.
Then $\sigma$ satisfies (i) since the range of $\tilde{\sigma}$
contains $I_{k-1}$ and if the range of $\tilde{\sigma}$  does not already
contain $I_k$ then $\sigma(k+1)=k$. To see that $\sigma$ satisfies
(ii), it suffices to check for $j=k+1$. But $I_{k+1}$ contains  at least 
$k$ elements of the range of $\sigma$ since $I_k$ contains at least 
$k-1$ elements and $\sigma(k+1)$ is at most $k+1$. Hence $\sigma$ 
satisfies (i) and (ii) and thus the collections satisfies (2).
Now let $\sigma$ be a path in the collection, and let $\tau$ be the
path described in (3). We need to show that $\tau$ is in the collection.
Property (i) is clear since the range of $\sigma$ equals that of $\tau$.
Property (ii) is only nontrivial for $j=k-1$. The property is clear
by monotonicity if $\tau(k-1)$ is in $I_{k-1}$. If $\tau(k-1)$
is not in $I_{k-1}$, then $\sigma(k)$ is not in $I_{k-1}$ and by
(i) we have that the range of $I_{k-1}$ under $\sigma$ is $I_{k-1}$.
This implies (ii) for $\tau$.

Another special case of our main theorem arises by choosing
$n$ appropriate functions $(b_j)_{1\le j\le n}$ and letting
$b_{\sigma,\Q,k}$ be suitably normalized restrictions of these
functions, that is with $j=\sigma(k)$,  
$$b_{\sigma,\Q,\sigma(k)}= b_{j} \mathds{1}_{Q_j}[b_{j} ]_{Q_j}^{-1},$$
where we have used the following notation for an average: 
$$[g]_P:=|P|^{-1}\int _P g.$$
We then obtain as straightforward corollary of the local theorem, following the
global/local reduction outlined in \cite{AHMTT}:

\begin{thm}[Perfect multilinear global $T(b)$
  theorem] \label{perfectglobaltb} Let $d\ge 1$ and $n\ge 2$ and let
  $\Lambda$ be an $n$-linear form acting on $n$-tuples of dyadic test
  functions on $\R^d$ associated with a perfect Calder\'on-Zygmund
  kernel. Let $(p_j)_{1\le j\le n}$ be a H\"older-tuple of exponents
  and assume we are given functions $(b_j)_{1\le j\le n}$ with the
  following properties:

\noindent{\it Pseudo-accretivity condition:}
For all dyadic cubes $Q$
\begin{equation}\label{globalmean} 
|[b_{j}]_Q|\ge 1.
\end{equation}
{\it Norm bound condition:} 
\begin{equation}\label{globalnormbound}
\|b_j\|_{\infty} \le B. 
\end{equation}
{\it Weak boundedness condition:} For all dyadic cubes $Q$ 
\begin{equation}\label{globalweaktesting}
|\Lambda(b_1\mathds{1}_Q,\dots, b_{n}\mathds{1}_Q)|\le B.
\end{equation}
{\it BMO condition:} For any $k$ and any dyadic test function $g$
\begin{equation}\label{globalstrongtesting}
|\Lambda(\dots, g, \dots )|\le B\|g\|_{H_1},
\end{equation}
where the $j$-th entry in the form is $b_j$ for $j\neq k$ and $g$ for
$j=k$, and $\|g\|_{H_1}$ is the norm of the dyadic Hardy space
(pre-dual of dyadic BMO).  Then for some constant $C$ depending on
$n$, $d$, the constant $B$, and the H\"older tuple $(p_j)_{1\le j\le
  n}$ we have
\begin{equation}\label{globalmultilinearcbound}
|\Lambda(f_1,\dots, f_{n})|\le C \prod_{j=1}^{n}\|f_j\|_{p_j}
\end{equation}
for any $n$-tuple $(f_j)_{1\le j\le n}$ of dyadic test functions.
\end{thm}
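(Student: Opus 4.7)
The plan is to apply Theorem~\ref{perfecttb} with $k=n$, taking the admissible collection $\Sigma$ to consist of all paths in $I_n$; admissibility of the full collection is immediate. For each path $\sigma$ of length $n$, each $\sigma$-nested tuple $\Q$, and each $j\in \sigma(I_{n-1})$, set
\begin{equation*}
b_{\sigma,\Q,j} := b_j\, \mathds{1}_{Q_j}\, [b_j]_{Q_j}^{-1}.
\end{equation*}
The support condition \eqref{support} is immediate; the mean condition \eqref{mean} follows from $\int b_{\sigma,\Q,j} = [b_j]_{Q_j}^{-1}\int_{Q_j} b_j = |Q_j|$; the norm bound \eqref{normbound} follows from $\|b_j\|_\infty\le B$ combined with pseudo-accretivity $|[b_j]_{Q_j}|\ge 1$; and the interdependence \eqref{interdependence} is automatic since $b_{\sigma,\Q,j}$ depends only on $j$ and $Q_j$.

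The heart of the argument is verifying the testing condition \eqref{testing}. Fix $\sigma$ of length $n$, a $\sigma$-nested $\Q$, and a dyadic test function $g$ supported on $P:=Q_{\sigma(n)}$; set $k_0:=\sigma(n)$. The tuple appearing in \eqref{testing} has $f_{k_0}=g$ and $f_j = c_j\, b_j\,\mathds{1}_P$ with $c_j:=[b_j]_{Q_j}^{-1}$, $|c_j|\le 1$, for $j\neq k_0$. Since $\|b_j\mathds{1}_P\|_{p_j}\ge |[b_j]_P|\,|P|^{1/p_j}\ge |P|^{1/p_j}$ by Jensen and pseudo-accretivity, it suffices by multilinearity to establish
\begin{equation*}
|\Lambda(b_1\mathds{1}_P,\dots, g,\dots, b_n\mathds{1}_P)| \lesssim |P|^{1-1/p_{k_0}}\|g\|_{p_{k_0}}
\end{equation*}
with $g$ in slot $k_0$. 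Decompose $g = g_0 + [g]_P\mathds{1}_P$, where $g_0:=g-[g]_P\mathds{1}_P$ is mean-zero on $P$.

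For the $g_0$ contribution, expand each $b_j = b_j\mathds{1}_P + b_j\mathds{1}_{P^c}$. By perfect smoothness \eqref{smoothness}, every cross term contains some $b_j\mathds{1}_{P^c}$, a function vanishing on $P$, together with $g_0$, a mean-zero function supported on $P$, and thus vanishes. Only the fully-restricted term survives:
\begin{equation*}
\Lambda(b_1\mathds{1}_P,\dots, g_0,\dots, b_n\mathds{1}_P) = \Lambda(b_1,\dots, g_0,\dots, b_n).
\end{equation*}
The BMO hypothesis \eqref{globalstrongtesting} bounds the right-hand side by $B\|g_0\|_{H_1}$, and the standard dyadic atomic estimate $\|g_0\|_{H_1}\lesssim |P|^{1-1/p_{k_0}}\|g\|_{p_{k_0}}$ concludes this case.

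For the $[g]_P\mathds{1}_P$ contribution, further decompose $\mathds{1}_P = c_{k_0}\, b_{k_0}\mathds{1}_P + (\mathds{1}_P - c_{k_0}\, b_{k_0}\mathds{1}_P)$ with $c_{k_0}:=[b_{k_0}]_P^{-1}$. Inserted in slot $k_0$, the first summand yields $|c_{k_0}|\,|\Lambda(b_1\mathds{1}_P,\dots, b_n\mathds{1}_P)|$, controlled by the weak boundedness \eqref{globalweaktesting}; the second summand is supported on $P$, mean-zero there, with $L^{p_{k_0}}$ norm $\lesssim |P|^{1/p_{k_0}}$, and is handled exactly as $g_0$. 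Multiplying by $|[g]_P|\le \|g\|_{p_{k_0}}|P|^{-1/p_{k_0}}$ reproduces the desired bound. Combining both contributions verifies \eqref{testing}, and Theorem~\ref{perfecttb} delivers \eqref{globalmultilinearcbound}. The main obstacle is precisely this passage from the global BMO and weak boundedness conditions to the localized testing estimate; the key device is perfect smoothness, which enables removal of the cutoffs $\mathds{1}_P$ in the neighbouring slots against a mean-zero function on $P$, reproducing the global/local reduction of \cite{AHMTT}.
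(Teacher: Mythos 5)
Your proposal is correct and is exactly the argument the paper has in mind: the paper itself only sketches this corollary, prescribing precisely your choice $b_{\sigma,\Q,j}=b_j\mathds{1}_{Q_j}[b_j]_{Q_j}^{-1}$ and referring to the global/local reduction of \cite{AHMTT}, and your verification of the testing condition (perfect smoothness to remove the cutoffs against a mean-zero function, the BMO hypothesis via the atomic bound $\|g_0\|_{H_1}\lesssim |P|^{1-1/p_{k_0}}\|g_0\|_{p_{k_0}}$, and weak boundedness for the average part) is the standard way to fill it in. One caveat: with \eqref{globalweaktesting} read literally as $|\Lambda(b_1\mathds{1}_Q,\dots,b_n\mathds{1}_Q)|\le B$, your final step ``multiplying by $|[g]_P|\le \|g\|_{p_{k_0}}|P|^{-1/p_{k_0}}$ reproduces the desired bound'' is off by a factor of $|P|$ in the weak-boundedness term, since you need $|P|^{1-1/p_{k_0}}\|g\|_{p_{k_0}}$ but obtain only $B|P|^{-1/p_{k_0}}\|g\|_{p_{k_0}}$; the argument closes with the scale-invariant normalization $|\Lambda(b_1\mathds{1}_Q,\dots,b_n\mathds{1}_Q)|\le B|Q|$, which is what the scaling of the conclusion (and of the testing condition \eqref{testing}) dictates and is surely what is intended in \eqref{globalweaktesting}.
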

A bilinear continuous version of this theorem appears in \cite{Hart}.
Theorem \ref{perfecttb} arose from our efforts to adapt the techniques
of \cite{AHMTT} and subsequent papers to the multilinear setting,
setting up an induction on the number of functions $b$ that are not
characteristic functions.  In order to induct, we also refined the
technique in \cite{AHMTT} so as to use only multilinear estimates with
one fixed set of H\"older tuples. Our approach might give the reader 
new insights into the proof of the local $T(b)$ theorem for dyadic 
model operators in the linear case as well. 
We attempted to keep a maximal degree of symmetry in the argument
between dual versions of the same argument.

We outline briefly the aspect of precise exponents in the
norm bounds \eqref{normbound} on the testing functions.
In the earliest local $T(b)$ theorem, \cite{C}, Christ assumed that 
$b_Q^1, b_Q^2, Tb_Q^1,
T^*b_Q^2\in L^{\infty}$ uniformly with respect to $Q$. In \cite{NTV}
Nazarov, Treil and Volberg proved, in a non-doubling measure setup, that it
suffices to assume $b_Q^1, b_Q^2\in L^{\infty}$ and $Tb_Q^1, T^*b_Q^2\in
\mathrm{BMO}$ uniformly in $Q$. 
Auscher, Hofmann, Muscalu, Tao and Thiele
\cite{AHMTT} for dyadic model operators relaxed these
conditions assuming only $b_Q^1\in L^p$, $b_Q^2\in L^q$, $Tb_Q^1\in L^{q'}$
and $T^*b_Q^2\in L^{p'}$ for any $p, q\in (1, \infty)$ where the
different norms are appropriately scaled relative to $|Q|$, see also \cite{LV}.   

In 2008 Hofmann during his plenary lectures at the International
Conference on Harmonic Analysis and P.D.E. in El Escorial formulated
the question whether these testing conditions for the model dyadic
case also suffice for genuine singular integral operators. The
question was motivated by possible applications to layer potentials
and to free boundary theory.  Hofmann himself proved that it suffices
to assume $b_Q^1, b_Q^2\in L^{2+\varepsilon}$ and $Tb_Q^1, T^*b_Q^2\in
L^{2}$ for some $\varepsilon>0$. Auscher and Yang \cite{AY} eliminated
$\varepsilon>0$ from Hofmann's theorem by reducing the matters to the
dyadic case from \cite{AHMTT}. In fact they covered the sub-duality
case $1/p+1/q\le 1$. Auscher and Routin \cite{AR} covered the
super-duality case $1/p+1/q\ge1$ under some technical assumption
rather difficult to verify. Finally, Hyt\"onen and Nazarov \cite{HN}
provided the positive answer to Hofmann's question.

We comment one specific aspect of our proof: certain relatively
standard estimates near the end of the proof are accomplished using
the outer measure language from \cite{DT}.  We found this language
very useful here and hope the investment into understanding the novel
language will pay off in related questions of this kind, as it  has
been done in the present case.

A natural question which deserves for further investigation concerns
the extensions of our theorem to standard Calder\'on-Zygmund operators.

\section{Proof of the perfect multilinear local $T(b)$ theorem}

\subsection{General setup}
We prove Theorem \ref{perfecttb} by induction on $k$.
For $k=1$ the theorem specializes to Theorem \ref{perfectt1}
and this establishes the induction beginning.
Let $k\ge1$ and assume that the statement of 
Theorem  \ref{perfecttb} is true for this particular $k$.
We then have to prove the theorem with $k$ replaced by $k+1$.

Assume we are given $n\ge k+2$ and an $n$-linear perfect
Calder\'on--Zygmund form $\Lambda$ and an admissible collection
$\Sigma$ of paths in $I_n$.  For every admissible path $\sigma$ of
length $k+1$ and every $\sigma$-nested tuple $\Q$ and every $i\le k$
we are given $b_{\sigma,\Q,\sigma(i)}$ satisfying the assumptions in
Theorem \ref{perfecttb}.

For each admissible path $\tilde{\sigma}$ of length $k$ and each
$\tilde{\sigma}$-nested tuple $\Q$ and each $j<k$ 
we define
$$\tilde{b}_{\tilde{\sigma},\Q,\tilde{\sigma}(j)}:={b}_{\sigma,\Q,\sigma(j)},$$ 
where $\sigma$ is any admissible path of length $k+1$ extending the
path $\tilde{\sigma}$.  Note that such a path exists by the definition of
admissibility, that $\Q$ is also $\sigma$-nested, and that the
function on the right-hand side does not depend on the particular
choice of the extended path $\sigma$ by the interdependence assumption
\eqref{interdependence}. If $k+1=n$, then $\Q$ satisfies the
requirement $Q_{\sigma(n-1)}=Q_{\sigma(n)}$.

Then clearly this new set of testing functions satisfies the support
assumption \eqref{support}, the mean assumption \eqref{mean}, the norm
bound assumption \eqref{normbound}, and the interdependence assumption
\eqref{interdependence} of Theorem \ref{perfecttb} for $k$.  The main
part of the proof is to establish the testing condition
\eqref{testing} for this collection $\tilde{b}_{\tilde{\sigma},\Q,j}$
for some possibly new constant $B$ depending only on $n$, $d$, the
given constant $B$, and the tuple $(p_j)_{1\le j\le n}$.  Then
boundedness of $\Lambda$ follows by the induction hypothesis.

Let $A$ be the best constant in the inequality 

\begin{equation}\label{multilinearabound}
|\Lambda(f_1,\dots, f_{n})|\le A \prod_{j=1}^{n}\|f_j\|_{p_j}
\end{equation}
for any admissible path $\tilde{\sigma}$ of length $k$, any
$\tilde{\sigma}$-nested tuple $\Q$, and any dyadic test function
$f_{\tilde{\sigma}(k)}$, where $f_{\tilde{\sigma}(j)}=
\mathds{1}_{Q_{\tilde{\sigma}(k)}}\tilde{b}_{\tilde{\sigma},\Q,\tilde{\sigma}(j)}$
whenever $j< k$, and where $f_{s}=\mathds{1}_{Q_{\tilde{\sigma}(k)}}$
for any $s$ which is not in the range of $\tilde\sigma$.

By the truncation assumption on the form $\Lambda$, the constant $A$
is finite.  We will show that $A$ can be estimated by a constant
depending only on $n$, $d$, $B$, and the tuple $(p_j)_{1\le j\le n}$,
which will establish the testing assumption \eqref{testing} for the
collection $\tilde{b}_{\tilde{\sigma},\Q,j}$. Then the induction
hypothesis will do the job and Theorem \ref{perfecttb} is established
for $k+1$.

Let $\tilde{\sigma}$ be an admissible path of length $k$ and $\Q$ a
$\tilde{\sigma}$-nested tuple and $f_{\tilde{\sigma}(k)}$ a dyadic
test function such that equality in \eqref{multilinearabound} is
attained for this data. Since $\Lambda$ has finite rank, such extremal
point exists. Indeed, the extremal function $f_{\tilde{\sigma}(k)}$
can be chosen to be a dyadic test function.

Let $\sigma$ be an admissible extension of $\tilde{\sigma}$ of length $k+1$.
So far the entire setup is symmetric under permutation of the 
numbers $1,\dots,n$, so to simplify notation by symmetry we may assume $\sigma$ 
is the path
$$\sigma(j)=j$$
for $j< k$ and 
$$\sigma(k)=k+1,\ \sigma(k+1)=k.$$
We shall also need the path $\tau$ of length $k+1$ which interchanges
the last two steps of $\sigma$, that is the identity embedding
$\tau(j)=j$ for $j\le k+1$.
Note that $b_{\sigma,\Q,j}$ coincides with $b_{\tau,\Q,j}$ for
$j< k$ by the interdependence assumption \eqref{interdependence}.
Note also that $Q_k=Q_{k+1}$. We set $Q:=Q_k=Q_{k+1}$.

For a dyadic cube  $P$ define with
$f_{\sigma(j)}= \mathds{1}_Q \tilde{b}_{\tilde{\sigma},\Q,\tilde{\sigma}(j)}$ whenever $j< k$,
and $f_{s}=\mathds{1}_{Q}$ for any $s$ which is not in the range of $\sigma$:
$$\Lambda_P(\varrho ,\tilde{\varrho}):=\Lambda(f_1 \mathds{1}_P, \dots ,
f_{k-1}\mathds{1}_P, 
\varrho \mathds{1}_P, \tilde{\varrho}\mathds{1}_P,f_{k+2}\mathds{1}_P , \dots,f_n \mathds{1}_P), $$
that is the $k$-th and $k+1$-st entry are $\varrho \mathds{1}_P$ and
$\tilde{\varrho}\mathds{1}_P$ respectively, while the $j$-th entry with
$j\neq k,k+1$ is $f_j \mathds{1}_P$. Then we have
\begin{equation}\label{multilinearaequality}
|\Lambda_{Q}(f_k, f_{k+1})|=A \prod_{j=1}^{n}\|f_j\|_{p_j}
\end{equation}
with $f_k=\mathds{1}_{Q}$ and with $f_{k+1}$ the chosen extremizing function.

\subsection{The first stopping time}

We consider the setup of the previous section, in particular the paths $\sigma,\tau$
and the tuple $\Q$ have these specific meanings, as well as the chosen functions $f_j$.
We introduce the abbreviations
$$g:=f_{k+1}, \ h:=f_k,$$
$$q:=p_{k+1}, \ r:=p_k. $$
We also abbreviate the particular testing function  
$b_{\sigma,\Q,k+1}$ by $u$.
We continue to write
$[\varrho]_P$ for the average of a function $\varrho$ over a
cube $P$, and we write $\varrho_P$ for the truncated function
$\varrho \mathds{1}_P$. We define 
$$F:= \prod_{j\neq k,k+1}\|f_j\|_{p_j}=\prod_{j\neq k,k+1}\|f_j\mathds{1}_Q\|_{p_j}.$$
Finally, we choose  $\varepsilon>0$ small enough so that 
$$\varepsilon= (1/8)\min_{1\le j\le n}\left( 7/8 \right)^{{p_j}/{(p_j-1})} B^{-{1}/{(p_j-1)}}. $$

We define a stopping time inside $Q$, that is a collection of pairwise
disjoint cubes contained in $Q$ which have good properties relative to the functions
$f_j$, the form $\Lambda$, and the testing functions.

Let $\P_1$ be the collection of maximal dyadic cubes 
$P$ contained in $Q$ for which there exists a $1\le j\le n$ with 
\begin{equation}\label{stopupperthresholdnorm}
|P|^{-1}\|f_j \mathds{1}_P \|_{p_j}^{p_j}\ge  n \varepsilon^{-1} 
|Q|^{-1}\|f_j\|_{p_j}^{p_j}.
\end{equation}
Let $\P_2$ be the collection of maximal dyadic cubes 
$P$ contained in $Q$ which satisfy 
\begin{equation}\label{stopupperthresholdnormb}
|P|^{-1}\|u_P \|_{q}^{q}\ge  \varepsilon^{-1} 
|Q|^{-1}\|u \|_{q}^{q}.
\end{equation}
Let $\P_3$ be the collection of maximal dyadic cubes
$P$ contained in $Q$ for which there exists a
nonzero function $\varrho $ supported on $P$ with mean zero such that
\begin{equation}\label{stopupperdualthreshold}
|\Lambda_P(\varrho , u)|\ge B F \varepsilon^{-1}(|P|/|Q|)^{1-1/r} \|\varrho\|_{r} \|u\|_{q}.
\end{equation}
Let $\P_4$ be the collection of maximal dyadic cubes 
$P$ contained in $Q$ which satisfy 
\begin{equation}\label{stoplowerthresholdmean}
|[u]_P| \le  1/{8}.
\end{equation}
Let $\P_5$ be the collection of maximal dyadic cubes 
$P$ contained in $Q$ which are contained in
at least $2^d \varepsilon^{-1}$ many dyadic cubes which are
parents of cubes in $\P_4$.
Let $\P$ be the collection of maximal dyadic cubes in 
$\P_1\cup \P_2\cup \P_3\cup \P_4\cup \P_5$.

We claim that 
\begin{equation}\label{firststopping}
\sum_{P\in \P} |P|\le (1-\varepsilon)|Q|.
\end{equation}
To verify the claim, we discuss the sets $\P_1$ through $\P_5$ separately.
The collection $\P_1$ consists of pairwise disjoint cubes and satisfies
$$\sum_{P\in \P_1} |P|\le \varepsilon n^{-1} |Q| \sum_{P\in \P_1}
\sum_{j=1}^n \|f_j \mathds{1}_P \|_{p_j}^{p_j} \|f_j \|_{p_j}^{-p_j}
\le \varepsilon |Q|.$$ The collection $\P_2$ is estimated similarly.
To estimate $\P_3$, consider for each $P\in \P_3$ a function $\varrho _P$
supported on $P$ with mean zero satisfying $\|\varrho_P\|_{r}^{r}=|P|$
and inequality \eqref{stopupperdualthreshold} without the absolute
value on the left-hand side.  Then we have with the
testing assumption \eqref{testing} for $\sigma$, $\Q$, $k$:
$$\sum_{P\in \P_3}  B F \varepsilon^{-1}|P||Q|^{-1+1/r} \|u\|_{q} 
\le \sum_{P\in \P_3}\Lambda_P(\varrho_P,u)\le \Big|\Lambda_{Q}\Big(\sum_{P\in \P_3} \varrho_P,u\Big)\Big|$$
$$\le B F \Big\|\sum_{P\in \P_3} \varrho_P\Big\|_{r}
\|u\|_{q} 
\le BF |Q|^{1/r}
\|u\|_{q}.$$
Hence
$$\sum_{P\in \P_3}  |P|\le \varepsilon |Q|.$$
To estimate the collection $\P_4$, set $E=Q\setminus \bigcup_{P\in\P_4} P$. Then
$$|E|^{1-1/q}\|u\|_{q}\ge \Big|\int_E 
u\Big| =|Q| -\sum_{P\in \P_4}|P||[u]_P|\ge |Q|-\sum_{P\in \P_4} (1/8) |P|\ge (7/8) |Q| .$$
This implies
$$|E|^{1-1/q}B^{1/q}\ge (7/8)  |Q|^{1-1/q},$$
$$|E| \ge (7/8)^{q/(q-1)} B^{-1/(q-1)} |Q|,$$
which in turn implies 
$$\sum_{P\in \P_4}|P|\le (1-8 \varepsilon)|Q|.$$
Finally, we have the estimate
$$\sum_{P\in \P_5}|P|\le (2^{-d}\varepsilon) \sum_{P\in \P_4} 2^d |P|\le \varepsilon|Q|.$$
Adding the contributions from $\P_1$ through $\P_5$  proves the claim.

We call the cubes in $\P_1\cup \ldots \cup \P_5$ the stopping cubes.
We note that if $P$ is not contained in any child or grandchild of a stopping cube,
then we have the following upper bounds with a constant $C$ depending only
on $d$, $n$, $B$, 
\begin{equation}\label{upperthresholdnorm}
|P|^{-1}\|f_j \mathds{1}_P \|_{p_j}^{p_j}\le  C |Q|^{-1}\|f_j \|_{p_j}^{p_j},
\end{equation}
\begin{equation}\label{upperthresholdnormb}
|P|^{-1}\|u_P \|_{q}^{q}\le  C 
|Q|^{-1}\|u \|_{q}^{q}\le C,
\end{equation}
and 
\begin{equation}\label{upperdualthreshold}
|\Lambda_P(\varrho , u)|\le C (|P|/|Q|)^{1-1/r} \|\varrho\|_{r}\|u\|_{q}\prod_{j\neq k,k+1}\|f_j\|_{p_j}
\end{equation}
for any function $\varrho$ supported on $P$ and with mean
zero. For the cubes $P$ not contained in any stopping cube this is
clear by the construction. For the stopping cubes themselves or their
children this follows by observing the estimate for the parent or
grandparent of the cube and deducing the estimate with a modified
constant for the cube itself. Such passage to the stopping cubes
applies only for the upper bounds listed above, the threshold
\eqref{stoplowerthresholdmean} leads to the lower bound
\begin{equation}\label{lowerthresholdmean}
|[u]_P| \ge 1/{8}
\end{equation}
only for all cubes not contained in a stopping cube of type $\P_4$.  It does not yield analoguous 
lower bounds for the stopping cubes in $\P_4$ themselves. This is the reason for introducing the collection $\P_5$
and the special arguments concerning $\P_4$ below.

\subsection{Pruning the function $g$}

In this section we replace the function $g$ with a modified function
$\mathfrak g$ which is adapted to the first stopping time.

Let $\overline{\P}_4$ be the collection of parents of dyadic cubes in 
$\P\cap \P_4$
and let ${\P'_4}$ be the collection of dyadic cubes which are siblings of
cubes in $\P\cap \P_4$ but not themselves cubes in $\P\cap \P_4$.

Define 
\begin{equation}\label{ftildeexpansion}
\mathfrak g=g-[g]_{Q} u
-\sum_{P\in \P\setminus \P_4} g_P 
+\sum_{P\in \P\setminus \P_4} \frac{[g]_P}{ [u]_P}u_P
-\sum_{P\in \P\cap \P_4} g_P +\sum_{P\in \overline{\P}_4} \frac{[g]_P}{ [u]_P}u_P
-\sum_{P\in {\P'_4}} \frac{[g]_P}{ [u]_P}u_P
\end{equation}
and note that $\mathfrak g$ is still supported on $Q$ and $[\mathfrak g]_Q=0$ and
$\|\mathfrak g\|_q^q\le C\|g\|_q^q$.

We claim that the desired bound for $A$ in \eqref{multilinearaequality} follows from
\begin{equation}
  \label{eq:1}
  |\Lambda_{Q}(h,g-\mathfrak g)|\le \big(A(1-\varepsilon)^{1/p_k}+C\big)F \|g\|_q\|h\|_r
\end{equation}
and
\begin{equation}\label{multilineartildeabound}
  |\Lambda_{Q}(h,\mathfrak g)|\le 
  C F \|g\|_q\|h\|_r.
\end{equation} 
Here and in the sequel $C$ denotes a constant which depends only on $B$,
$n$, $d$, and the tuple $(p_j)_{1\le j\le n}$, but may vary from line
to line.  Indeed, to verify the claim, it suffices now to expand in identity
\eqref{multilinearaequality} the function $g$ into $\mathfrak g$ plus
correction term and make use of \eqref{eq:1} and
\eqref{multilineartildeabound}.  
Then dividing both sides by the product of norms we obtain
$$A\le A(1-\varepsilon)^{1/p_k}+C.$$
Solving $A$ from this inequality the desired bound for $A$ is
established and the matters are reduced to proving \eqref{eq:1} and
\eqref{multilineartildeabound}.

We begin with the bound in \eqref{eq:1} and estimate separately the
contributions of the various terms of the difference $g-\mathfrak
g$. For this purpose for each $P\in \P$ we add and
subtract to $g-\mathfrak g$ a new term involving the function
$b_{\sigma, \Q_P, k+1}$ associated with the permutation $\sigma$ and
the chain $\Q_P$ given by
$$Q_1\supseteq Q_2\ldots \supseteq Q_{k-1}\supseteq P\supseteq \ldots \supseteq P, $$
that is the chain $\Q_P$ coincides with $\Q$ up to entry $k-1$ and then stabilizes to
$P$.  Therefore, we obtain
\begin{align}
  \label{eq:10}
  g-\mathfrak g=[g]_{Q} u
+\sum_{P\in \P} (g_P-[g]_Pb_{\sigma, \Q_P, k+1})
\end{align}
\begin{align}
  \label{eq:13}
+
\sum_{P\in \P\setminus \P_4}\Big([g]_Pb_{\sigma, \Q_P, k+1}- \frac{[g]_P}{ [u]_P}u_P\Big)
\end{align}
\begin{align}
  \label{eq:14}
+
\sum_{P\in \P\cap \P_4} [g]_Pb_{\sigma, \Q_P, k+1} -\sum_{P\in \overline{\P}_4} \frac{[g]_P}{ [u]_P}u_P
+\sum_{P\in {\P'_4}} \frac{[g]_P}{ [u]_P}u_P.
\end{align}

We have by multilinearity and the testing assumption 
\eqref{testing}
$$|\Lambda_{Q}(h,[g]_{Q}u)|\le 
C F [g]_{Q} \|u\|_{q} \|h\|_r \le C F \|g\|_{q} \|h\|_r .$$ In the
second inequality we have estimated the mean of $g$ by H\"older's
inequality and the norm of $u$ by the norm bound assumption
\eqref{normbound}.  This establishes the desired estimate for the
first term of \eqref{eq:10} in the expansion of $g-\mathfrak g$.

Next we consider the sum in \eqref{eq:10} involving the stopping cubes 
from $\P$.   We  calculate
with multilinearity and the smoothness condition \eqref{smoothness}
$$\Lambda_{Q}\Big(h,\sum_{P\in \P} (g_P -[g]_P b_{\sigma,\Q_P,k+1})\Big)
=\sum_{P\in \P} \Lambda_P(h,g  -[g]_P b_{\sigma,\Q_P,k+1})$$
\begin{equation}\label{aandcnot4}
= \sum_{P\in \P} \Lambda_P(h, g)
- \sum_{P\in \P} \Lambda_P(h, [g]_P
b_{\sigma,\Q_P,k+1}).
\end{equation}
For the first summand in \eqref{aandcnot4} we use  estimate
\eqref{multilinearabound} with $A$
for the data $\tau$, $\Q_P$  and obtain by H\"older's inequality
$$ \sum_{P\in \P} |\Lambda_P(h, g)|\le A \sum_{P\in \P} \prod_{j=1}^{n} \|f_j\mathds{1}_P\|_{p_j}
 \le A 
\prod_{j=1}^{n} \Big(\sum_{P\in \P} \|f_j\mathds{1}_P\|_{p_j}^{p_j}\Big)^{1/p_j}$$
$$ \le A F 
\Big(\sum_{P\in \P}|P| \Big)^{1/r} \|g\|_q  \le A F (1-\varepsilon)^{1/r}
\|g\|_q\|h\|_r.$$
Here we have used that $h=\mathds{1}_Q$ and that $\sum_{P\in\P} |P|\le (1-\varepsilon)|Q|$.
The second term in \eqref{aandcnot4} we estimate by the testing assumption \eqref{testing} 
with the data
$\sigma$, $\Q_P$ and obtain
$$
\sum_{P\in \P} |\Lambda_P(h, [g]_P
b_{\sigma,\Q_P,k+1})|\le
C 
\sum_{P\in \P} |[g]_P|  \|b_{\sigma,\Q_P,k+1}\|_{q}\|h_P\|_r \prod_{j\neq k,k+1}\|f_j\mathds{1}_P\|_{p_j}$$
$$\le C 
\sum_{P\in \P}  \|g_P\|_q\|h_P\|_r\prod_{j\neq k,k+1} 
\|f_j\mathds{1}_P\|_{p_j}
\le C F \|g\|_q\|h\|_r . $$
We have used the upper bounds (\ref{upperthresholdnorm}) including the cases $f_{k+1}=g$ and $f_k=h$.

We now consider the sum  in \eqref{eq:13}. We use a vanishing mean again to write
$$\Lambda_{Q}\Big(h, 
\sum_{P\in \P\setminus \P_4}[g]_P b_{\sigma,\Q_P,k+1}
- \frac{[g]_P}{ [u]_P}u_P
\Big)$$
\begin{equation}\label{termsnot4}
=
\sum_{P\in {\P}\setminus \P_4}
\Lambda_P(h , 
{[g]_P}b_{{\sigma}, {\Q_P}, k+1})
- \sum_{P\in {\P}\setminus \P_4}
\Lambda_P\Big(h , 
\frac{[g]_P}{ [u]_P}u\Big)
\end{equation}

The first term in \eqref{termsnot4} is estimated similarly as before.
To estimate the second term in \eqref{termsnot4} we add and subtract a
term, involving the function $b_{\tau, \Q_P,k}$ associated with the
path $\tau$ and the chain $\Q_P$ as above, so that we obtain for that
term
\begin{equation}\label{asiotapnot4}
\sum_{P\in {\P}\setminus \P_4}
\Lambda_P\Big([h]_P b_{\tau, \Q_P, k}, 
\frac{[g]_P}{ [u]_P}u\Big)
+\sum_{P\in {\P}\setminus \P_4}
\Lambda_P\Big(h  - [h]_P b_{\tau, \Q_P, k}, 
\frac{[g]_P }{[u]_P}u\Big) .
\end{equation}

The first term in \eqref{asiotapnot4} is estimated by the testing
assumption \eqref{testing} with the data $\tau, \Q_P$ by
$$ C 
\sum_{P\in {\P}\setminus \P_4}
\bigg|\frac{[h]_P [g]_P}{ [u]_P} \bigg| \|b_{\tau, \Q_P,
  k}\|_{r}\|u_P\|_{q}\prod_{j\neq k,k+1}\|f_j\mathds{1}_P\|_{p_j}\le C
F \|g\|_q\|h\|_r . $$ Here we have used similarly as above the upper
bounds \eqref{upperthresholdnorm} and \eqref{upperthresholdnormb} and
\eqref{lowerthresholdmean}, and that the cubes in ${\P}\setminus \P_4$
are not contained in any of the cubes $\P_4$ and thus satisfy
\eqref{lowerthresholdmean}.  We also used \eqref{normbound} for
$b_{\tau,\Q_P,k}$ and \eqref{firststopping}.

The second term in \eqref{asiotapnot4} we estimate with
\eqref{upperdualthreshold} by
$$ CF \sum_{P\in {\P}\setminus \P_4}(|P|/|Q|)^{1-1/r}  
\bigg|\frac{[g]_P}{ [u]_P} \bigg|\left\|h_P-[h]_Pb_{\tau,\Q_P,k}\right\|_{r}
\|u\|_{q} \le 
C F \|g\|_q\|h\|_r. $$

The terms of the expansion \eqref{eq:14} of $g-\mathfrak g$ involving
$\overline{\P}_4$ are similar but slightly more involved since the
lower bound on the average of $b_{\sigma,\Q,k+1}$ is not available and
one has to therefore work with parent and sibling cubes.  Let
$\overline{P}$ denote the parent cube of a dyadic cube $P$.  
Then we rewrite  \eqref{eq:14} as $\sum_{P\in
  \overline{\P}_4} \xi^P$, where for each $P\in \overline{\P}_4$ the
function $\xi^P$ is defined as
\begin{equation}\label{bufferterms}
- \frac{[g]_P }{ [u]_P}u_P
+\sum_{P'\in {\P'_4}:\overline{P'}=P} \frac{[g]_{P'}}{ [u]_{P'}}u_{P'}
+\sum_{P'\in\P\cap\P_4: \overline{P'}= P}[g]_{P'} b_{\sigma,\Q_{P'},k+1}.
\end{equation}
Exactly $P$ and the children of $P$ contribute to $\xi^P$. Note that
at least one child of $P$ is in $\P\cap \P_4$. The mean of $\xi^P$ is
zero. Hence we can write with the smoothness condition
\eqref{smoothness}
$$\sum_{P\in \overline{\P}_4}\Lambda_{Q}(h, \xi^P)=
\sum_{P\in \overline{\P}_4}\Lambda_P(h, \xi^P).$$
Expanding $\xi^P$ again into three terms as in \eqref{bufferterms} and considering the terms 
separately, we obtain in analogy to \eqref{termsnot4}
\begin{equation}\label{terms}
\sum_{P\in \P\cap \P_4}
\Lambda_{\overline{P}}(h , 
{[g]_P}b_{\sigma, {\Q_P}, k+1})
- \sum_{P\in \overline{\P}_4}
\Lambda_P\Big(h , 
\frac{[g]_P}{ [u]_P}u\Big)
+ \sum_{P\in {\P'_4}}
\Lambda_{\overline{P}}\Big(h , 
\frac{[g]_P}{ [u]_P}u\Big)
\end{equation}
To estimate the first term in \eqref{terms}, we write for each $1\le j\le n$
$$f_j \mathds{1}_{\overline{P}}=f_j \mathds{1}_{P}+\sum_{P'\neq P, \overline{P'}=\overline{P}} f_j \mathds{1}_{P'}$$
and expand the multilinear form correspondingly.
Any term in the expansion which has a $f_j\mathds{1}_{P'}$ for  some $j$ can be estimated
by the decay condition \eqref{decay} so that we obtain for the penultimate
display the upper bound
$$\Big|\sum_{P\in \P\cap \P_4}\Lambda_{P}(h, [g]_P b_{\sigma,\Q_P,k+1})\Big|
+C \sum_{P\in \P_4} \prod_{j=1}^n \|f_j\mathds{1}_{\overline{P}}\|_{p_j}
\le C F\|g\|_q\|h\|_r.$$
Here we have estimated the first term as for the cubes $\P\setminus \P_4$ and we have
applied stopping conditions as before.

The second and third terms in \eqref{terms} are estimated similarly to
the case of cubes in $\P\setminus \P_4$. The cubes in
$\overline{\P}_4$ are not pairwise disjoint, but they have bounded
overlap since they are not contained in any cube of $\P_5$ by
construction.  Similarly the cubes in ${\P'_4}$ have bounded
overlap. This completes the proof of \eqref{eq:1}. The proof of
Theorem \ref{perfecttb} will be completed if we establish
\eqref{multilineartildeabound}.

\subsection{The second stopping time and pruning the function $h$}

Now let $A'$ be the best constant so that for all dyadic cubes
$R\subseteq Q$ we have the estimate
\begin{equation}\label{multilinearrbound}
|\Lambda_{R}(h,\mathfrak g-[\mathfrak g]_R[u]_R^{-1}u)|
\le 
A'F
|R||Q|^{-1}  \|g\|_q\|h\|_r.
\end{equation}
The constant $A'$ is again finite since $\Lambda$ satisfies the
truncation assumption.  We will show that $A'$ can be estimated from
above by a constant $C$ depending only on $n$, $d$, $B$, and
$(p_j)_{1\le j\le n}$. This will establish
\eqref{multilineartildeabound} by setting $R=Q$ since $[\mathfrak g]_{Q}=0$.

Fix a dyadic cube $R$ such that equality in \eqref{multilinearrbound}
is attained. Again such a cube $R$ exists since $\Lambda$ satisfies the
truncation assumption.  We may assume that $R$ is not contained in any
stopping cube of the first stopping time since for such cubes
$(\mathfrak g-[\mathfrak g]_R[u]_R^{-1}u) \mathds{1}_R=0$.  We set
$\hat{\mathfrak g}:=(\mathfrak g-[\mathfrak g]_R[u]_R^{-1}u) \mathds{1}_R$.

Consider the functions $b_{\tau, \Q_R,j}$ where $\Q_R$ is the chain
$$Q_1\supseteq Q_2\supseteq \ldots \supseteq Q_{k-1}\supseteq R\supseteq \ldots \supseteq R.$$
For the simplicity we shall write $v$ for $b_{\tau,\Q_R,k}$.

We invoke a second stopping time.
Let $\S_1$ be the collection of maximal dyadic cubes 
$S$ contained in $R$ for which there exists 
$1\le j\le {n}$ with
\begin{equation}\label{stopuppertnorm2}
|S|^{-1}\|f_j \mathds{1}_S \|_{p_j}^{p_j}\ge  n \varepsilon^{-1} 
|R|^{-1}\|f_j\mathds{1}_R \|_{p_j}^{p_j}.
\end{equation}
or
\begin{equation}\label{stopuppertnormb2tilde}
|S|^{-1}\| \mathfrak g_S \|_{q}^{q}\ge  \varepsilon^{-1} 
|R|^{-1}\|\mathfrak g_R\|_{q}^{q}.
\end{equation}
Let $\S_2$ be the collection 
of maximal dyadic cubes 
$S$ contained in $R$ which satisfy 
\begin{equation}\label{stopuppertnormb2}
|S|^{-1}\|v_S \|_{r}^{r}\ge 2 \varepsilon^{-1} 
|R|^{-1}\|v\|_{r}^{r}
\end{equation}
or 
\begin{equation}\label{stopuppertnormb3}
|S|^{-1}\|u_S \|_{q}^{q}\ge 2 \varepsilon^{-1} 
|R|^{-1}\|u_R\|_{q}^{q}
\end{equation}
Let $\S_3$ be the collection of maximal dyadic cubes
$S$ contained in $R$ for which there exists a
nonzero function $\varrho$ supported on $S$ with mean zero such that
\begin{equation}\label{stopupperdualthreshold2}
|\Lambda_S(v,\varrho)|\ge B \varepsilon^{-1}(|S|/|R|)^{1-1/q} \|\varrho\|_{q} \|v\|_{r}\prod_{j\neq k,k+1}\|f_j\mathds{1}_R\|_{p_j},
\end{equation}
or there exists a
nonzero function $\varrho $ supported on $S$ with mean zero such that
\begin{equation}
\label{eq:15}
|\Lambda_S(\varrho , u)|\ge B  \varepsilon^{-1}(|S|/|R|)^{1-1/r} \|\varrho\|_{r} \|u_R\|_{q}\prod_{j\neq k,k+1}\|f_j\mathds{1}_R\|_{p_j}.
\end{equation}
Let $\S_4$ be the collection of maximal dyadic cubes $S$ contained
in $R$ which satisfy 
\begin{equation}\label{stoplowertmean2}
|[v]_S| \le 1/{8}.
\end{equation}
Let $\S_5$ be the collection of maximal dyadic cubes 
$S$ contained in $R$ which are contained in
at least $2^d \varepsilon^{-1}$ many dyadic cubes which are
parents of cubes in $\S_4$.
Let $\S$ be the collection of maximal dyadic cubes in 
$\S_1\cup \S_2\cup \S_3\cup \S_4\cup \S_5$.

Then we have similarly as for the first stopping time
\begin{equation}\label{firststopping2}
\sum_{S\in \S} |S|\le (1-\varepsilon)|R|.
\end{equation} 
Also we obtain the following upper in analogy to the first stopping time. If
$S$ is not contained in any child of a stopping cube, then we have the
following upper bounds for $1\le j<n$ with a constant $C$ depending
only on $d$, $n$, $B$,
\begin{equation}\label{uppertnorm2}
|S|^{-1}\|f_j \mathds{1}_S\|_{p_j}^{p_j}\le  C |R|^{-1}\|f_j\mathds{1}_R \|_{p_j}^{p_j}\le 
C |Q|^{-1}\|f_j \|_{p_j}^{p_j}.
\end{equation}
In the latter inequality we have used \eqref{upperthresholdnorm}
and the fact that $R$ is not contained in any stopping cube of the first stopping time. We also have 
\begin{align}
  \label{eq:9}
|S|^{-1}\|\mathfrak g_S\|_{q}^{q}\le  C |R|^{-1}\|\mathfrak g_R \|_{q}^{q}\le 
C |Q|^{-1}\|g \|_{q}^{q}.  
\end{align}
Here the last inequality follows by estimating the various terms in
the expansion of $\mathfrak g$. Indeed, we have
$$\|g_R\|_{q}^{q}\le |R||Q|^{-1}
\|g\|_{q}^{q} $$
since $R$ is not contained in any stopping cube $P$ of the first stopping time.
We also have for the same reason
$$\|[g]_{Q} u_R\|_{q}^{q}\le C|R||Q|^{-1} \|g\|_{q}^{q}. $$
Next we have
$$\Big\|\sum_{P\in \P\setminus\P_4} g_P\mathds{1}_R \Big\|_{q}^{q}\le |R||Q|^{-1}\|g\|_{q}^q$$
by disjointness of the cubes $P\in \P$ and by the upper bounds from
\eqref{upperthresholdnorm}. Similarly
$$\Big\|\sum_{P\in \P\setminus\P_4} \frac{[g]_P}{[u]_P}u_P  \mathds{1}_R\Big \|_{q}^{q}\le |R||Q|^{-1}\|g\|_{q}^q .$$
Similarly we estimate the terms corresponding with the stopping cubes $P\in
\P_4\cap \P$. For  the cubes from $\overline{\P}_4$ and
${\P'_4}$ we use their bounded overlapping. We further obtain the upper bounds
\begin{equation}\label{uppertnormb2}
|S|^{-1}\|v_S\|_{r}^{r}\le  C 
|R|^{-1}\|v\|_{r}^{r}
\end{equation}
and
\begin{equation}\label{uppertnormb3}
|S|^{-1}\|u_S\|_{q}^{q}\le  C 
|R|^{-1}\|u_R\|_{q}^{q}\le C|Q|^{-1}\|u\|_q
\end{equation}
and 
\begin{equation}\label{upperdualt2}
|\Lambda_S(v, \varrho)|\le C F (|S|/|Q|)^{1-1/q}(|R|/|Q|)^{-1/r} \|\varrho\|_{q}\|v\|_{r}
\end{equation}
and
\begin{align}
  \label{eq:16}
  |\Lambda_S(\varrho , u)|\le CF(|S|/|Q|)^{1-1/r} \|\varrho\|_{r} \|u\|_{q}
\end{align}
for any function $\varrho$ supported on $S$ and with mean
zero.

Let $\overline{\S}_4$ be the collection of parents of dyadic cubes in $\S\cap \S_4$
and let ${\S'_4}$ be the collection of dyadic cubes which are siblings of
cubes in $\S\cap \S_4$ but not themselves cubes in $\S\cap \S_4$. Define 
\begin{equation}\label{htildeexpansion}
\mathfrak h=h_R-[h]_{R} v
-\sum_{S\in \S\setminus \S_4} h_S 
+\sum_{S\in \S\setminus \S_4} \frac{[h]_S}{ [v]_S}v_S
-\sum_{S\in \S\cap \S_4} h_S +\sum_{S\in \overline{\S}_4} \frac{[h]_S}{ [v]_S}v_S
-\sum_{S\in {\S'_4}} \frac{[h]_S}{ [v]_S}v_S
\end{equation}
and note that $\mathfrak h$ is supported on $R$ and $[\mathfrak h]_R=0$.
As in the first stopping time, the desired bound for $A'$ follows from
\begin{equation}
  \label{eq:2}
  |\Lambda_{R}(h-\mathfrak h,\hat{\mathfrak g})|\le
  \big(A'(1-\varepsilon)+C\big)F |R||Q|^{-1} \|g\|_q\|h\|_r
\end{equation}
and
\begin{equation}\label{tildehatbound}
|\Lambda_{R}(\mathfrak h,\hat{\mathfrak g})|\le C F |R||Q|^{-1}
\|g\|_q\|h\|_r.
\end{equation}
where $C$ may depend on $B$, $n$, $d$, and the tuple $(p_j)_{1\le j\le
  n}$.  Arguing similarly as in the proof of \eqref{eq:1} one obtains
\eqref{eq:2}. In what follows we repeat this argument
 with the necessary minor changes.

 To obtain the bound in \eqref{eq:2} we estimate separately the
 contributions of the various terms of the difference $h_R-\mathfrak
 h$. For doing so, for each $S\in \S$ we add and
subtract to $h_R-\mathfrak
 h$ a new term involving the function $b_{\tau, \Q_S, k}$
associated with the permutation $\tau$ and the chain $\Q_S$ given by
$$Q_1\supseteq Q_2\ldots \supseteq Q_{k-1}\supseteq S\supseteq \ldots \supseteq S, $$
that is the chain $\Q_S$ coincides with $\Q_R$ up to entry $k-1$ and then stabilizes to
$S$. Then we obtain
\begin{align}
  \label{eq:23}
  h_R-\mathfrak h=[h]_{R} v +
\sum_{S\in \S} (h_S-[h]_Sb_{\tau, \Q_S, k})
\end{align}
\begin{align}
  \label{eq:24}
  +\sum_{S\in \S\setminus \S_4} \Big([h]_Sb_{\tau, \Q_S, k}-\frac{[h]_S}{ [v]_S}v_S\Big)
\end{align}
\begin{align}
  \label{eq:25}
 + \sum_{S\in \S\cap \S_4} [h]_Sb_{\tau, \Q_S, k} -\sum_{S\in \overline{\S}_4} \frac{[h]_S}{ [v]_S}v_S
+\sum_{S\in {\S'_4}} \frac{[h]_S}{ [v]_S}v_S.
\end{align}

We have by multilinearity and the testing assumption
 \eqref{testing}
$$|\Lambda_{R}([h]_Rv,\hat{\mathfrak g})|\le 
C F (|R|/|Q|)^{1-1/q-1/r} \|\hat{\mathfrak g}_R\|_{q} \|v\|_r \le
CF|R||Q|^{-1} \|g\|_{q} \|h\|_r,$$ 
where we have used \eqref{uppertnorm2}, \eqref{eq:9}, norm bound
condition \eqref{normbound} for $v$ and $\|h\|_r=|Q|^{1/r}$.

Next we consider the sum from \eqref{eq:23} involving the stopping cubes in
$\S$ and we write with  the smoothness condition \eqref{smoothness}
\[
\Lambda_{R}\Big(\sum_{S\in \S} h_S -[h]_S b_{\tau,\Q_S,k}, \hat{\mathfrak g}\Big)
\]
\begin{equation}
\label{eq:12}
=\sideset{}{'}\sum_{S\in \S}\Lambda_{S}( h -[h]_S b_{\tau,\Q_S,k}, \hat{\mathfrak g})
+\sideset{}{''}\sum_{S\in \S
}\Lambda_{S}(h -[h]_S b_{\tau,\Q_S,k}, \hat{\mathfrak g}),
\end{equation}
where $\sum'$ denotes summation restricted to the cubes 
which are contained in cubes of the first stopping time and
$\sum''$ denotes summation restricted to the cubes
which are not contained in any cube of the first stopping time. 

 Let
us handle the first summand of \eqref{eq:12} and note if $S\subseteq
P$ for some $P\in\P$ then there exists $\varphi_S$ such that
$\hat{\mathfrak g}_S=\varphi_Su_S$ with the additional provision that
$\varphi_S=0$ if $u_S=0$. Indeed, if $S$ is contained in a stopping
cube $P$ of the first stopping time, then those terms in
\eqref{ftildeexpansion} which are not multiple of $u$ cancel and we
have in case $P\in \P\setminus \P_4$
\begin{align}
  \label{eq:17}
  |\varphi_S|
\le |[g]_{Q}|
+
\bigg|\frac{ [g]_{P}}{ [u]_{P}}\bigg|
+
\sum_{P'\in \overline{\P}_4\cup {\P'_4}: S\subseteq P'}
\bigg|\frac{ [g]_{P'}}{ [u]_{P'}}\bigg|
\le C|Q|^{-1/q}\|g\|_{q}.
\end{align}
Similarly  if $P\in \P\cap \P_4$
\begin{align}
  \label{eq:18}
  |\varphi_S|
\le |[g]_{Q}|+\sum_{P'\in \overline{\P}_4: S\subseteq P'}
\bigg|\frac{ [g]_{P'}}{ [u]_{P'}}\bigg|
\le C|Q|^{-1/q}\|g\|_{q}.
\end{align}
To estimate the first sum in \eqref{eq:12} we use \eqref{eq:16} to
conclude that
\[
\sideset{}{'}\sum_{S\in \S}|\Lambda_{S}(h -[h]_S b_{\tau,\Q_S,k}, \varphi_Su)|\le CF\|g\|_{q}
\sum_{S\in \S}(|S|/|Q|)^{1-1/r} \|h_S -[h]_S b_{\tau,\Q_S,k}\|_{r} 
\] 
\[
\le CF|R||Q|^{-1}\|g\|_{q}\|h\|_{r}.
\]
 To handle the second
summand in \eqref{eq:12} observe that
\[
\sideset{}{''}\sum_{S\in \S}
\Lambda_{S}( h -[h]_S b_{\tau,\Q_S,k}, \hat{\mathfrak g})
\]
\begin{align}
  \label{eq:19}
  =\sideset{}{''}\sum_{S\in \S}\Lambda_{S}( h, \mathfrak g-[\mathfrak
g]_S[u]_S^{-1}u)
\end{align}
\begin{align}
  \label{eq:20}
-\sideset{}{''}\sum_{S\in \S}\Lambda_{S}( [h]_S b_{\tau,\Q_S,k}, \mathfrak g-[\mathfrak
g]_S[u]_S^{-1}u)
\end{align}
\begin{align}
  \label{eq:21}
+
  \sideset{}{''}\sum_{S\in \S}\Lambda_{S}\big( h -[h]_S b_{\tau,\Q_S,k}, ([\mathfrak
g]_S[u]_S^{-1}-[\mathfrak
g]_R[u]_R^{-1})u\big).  
\end{align}
We use  estimate
\eqref{multilinearrbound} with $A'$
 and obtain for \eqref{eq:19} that
\[
\sideset{}{''}\sum_{S\in \S}
|\Lambda_{S}( h, \mathfrak g-[\mathfrak
g]_S[u]_S^{-1}u)|\le A'F|Q|^{-1}\|g\|_q\|h\|_r \sum_{S\in \S}|S|
 \le (1-\varepsilon)A'F|R||Q|^{-1}\|g\|_q\|h\|_r.
\]
Here we have used that  $\sum_{S\in\S} |S|\le (1-\varepsilon)|R|$.
In view of testing condition \eqref{testing} with data $\tau, \Q_S$ we see that
\[
\sideset{}{''}\sum_{S\in \S}|\Lambda_{S}( b_{\tau,\Q_S,k}, \mathfrak g-[\mathfrak
g]_S[u]_S^{-1}u)|\le CF\|h\|_{r}
\sideset{}{''}\sum_{S\in \S}(|S|/|Q|)^{1-1/q} \|\mathfrak g_S-[\mathfrak
g]_S[u]_S^{-1}u_S\|_{q}
\]
\[
\le CF\|g\|_q\|h\|_{r}|Q|^{-1}
\sum_{S\in \S}|S| \le CF|R||Q|^{-1}\|g\|_q\|h\|_{r} 
\]
since $S$ is not contained in any cube of the first stopping time and $\|\mathfrak g_S-[\mathfrak
g]_S[u]_S^{-1}u_S\|_{q}\le (|S|/|Q|)^{1/q}\|g\|_q$. This gives the
desired bound for \eqref{eq:20}. To estimate \eqref{eq:21} observe
that
\[
|[\mathfrak
g]_S[u]_S^{-1}-[\mathfrak
g]_R[u]_R^{-1}|\le C \big(|[\mathfrak
g]_S|+|[\mathfrak
g]_R|\big)\le |Q|^{-1/q}\|g\|_q. 
\]
Therefore, with the aid of \eqref{eq:16} we obtain
\[
|Q|^{-1/q}\|g\|_q\sideset{}{''}\sum_{S\in \S}|\Lambda_{S}(h -[h]_S b_{\tau,\Q_S,k}, u)| 
\]
\[
\le CF\|g\|_q\sideset{}{''}\sum_{S\in \S}(|S|/|Q|)^{1-1/r} \| h_S -[h]_S b_{\tau,\Q_S,k}\|_{r}
\le CF|R||Q|^{-1}\|g\|_q\|h\|_r.
\]

We now consider \eqref{eq:24} and note that by smoothness condition
\eqref{smoothness} we have  
\[
\sum_{S\in \S\setminus \S_4} \Lambda_R\Big([h]_Sb_{\tau, \Q_S,
  k}-\frac{[h]_S}{ [v]_S}v_S, \hat{\mathfrak g}\Big)
\]
\begin{align}
  \label{eq:26}
\Big(\sideset{}{'}\sum_{S\in \S\setminus \S_4}+
\sideset{}{''}\sum_{S\in \S\setminus \S_4}\Big) \Lambda_S\Big([h]_Sb_{\tau, \Q_S,
  k}-\frac{[h]_S}{ [v]_S}v, \hat{\mathfrak g}\Big).
\end{align}
For the first sum in \eqref{eq:26}, in view of \eqref{eq:16}, we
obtain the desired bound, since
\[
\sideset{}{'}\sum_{S\in \S\setminus \S_4}\Lambda_S\Big([h]_Sb_{\tau, \Q_S,
  k}-\frac{[h]_S}{ [v]_S}v, \hat{\mathfrak g}\Big)=
\sideset{}{'}\sum_{S\in \S\setminus \S_4}\varphi_S
\Lambda_S\Big([h]_Sb_{\tau, \Q_S,
  k}-\frac{[h]_S}{ [v]_S}v, u\Big)
\]
and $|\varphi_S|\le |Q|^{-1/q}\|g\|_q$.
To estimate the second sum in \eqref{eq:26} we have to proceed in a
similar way as for the second sum from \eqref{eq:12}. Namely, we write
\[
\sideset{}{''}\sum_{S\in \S\setminus \S_4}\Lambda_S\Big([h]_Sb_{\tau, \Q_S,
  k}-\frac{[h]_S}{ [v]_S}v, \hat{\mathfrak g}\Big)
\]
\begin{align}
  \label{eq:27}
  =\sideset{}{''}\sum_{S\in \S\setminus \S_4}[h]_S\Lambda_S(b_{\tau, \Q_S,
  k}, \mathfrak g-[\mathfrak g]_S[u]^{-1}_Su)
\end{align}
\begin{align}
  \label{eq:28}
  -\sideset{}{''}\sum_{S\in \S\setminus \S_4}\frac{[h]_S}{ [v]_S}\Lambda_S(v, \mathfrak g-[\mathfrak g]_S[u]^{-1}_Su)
\end{align}
\begin{align}
  \label{eq:29}
+\sideset{}{''}\sum_{S\in \S\setminus \S_4}
  \Lambda_S\Big([h]_Sb_{\tau, \Q_S,
  k}-\frac{[h]_S}{ [v]_S}v, ([\mathfrak g]_S[u]^{-1}_S-[\mathfrak g]_R[u]^{-1}_R)u\Big).
\end{align}
The sum in \eqref{eq:27} can be estimated by the testing condition
\eqref{testing} with the data $\tau, \Q_S$. The sum in \eqref{eq:28}
can be estimated in view of \eqref{upperdualt2} since $(\mathfrak
g-[\mathfrak g]_S[u]^{-1}_Su)\mathds{1}_S$ has mean zero. Arguing
similarly as in the proof of \eqref{eq:21} we can estimate the sum in \eqref{eq:29}.

Finally, it remains to bound  \eqref{eq:25} which can be written as $\sum_{S\in
  \overline{\S}_4} \xi^S$, where for each $S\in \overline{\S}_4$ the
function $\xi^S$ is defined as
\begin{equation}
\label{eq:30}
- \frac{[h]_S }{ [v]_S}v_S
+\sum_{S'\in {\S'_4}:\overline{S'}=S} \frac{[h]_{S'}}{ [v]_{S'}}v_{S'}
+\sum_{S'\in\S\cap\S_4: \overline{S'}= S}[h]_{S'} b_{\tau,\Q_{S'},k}.
\end{equation}
Exactly $S$ and the children of $S$ contribute to $\xi^S$. Note that
at least one child of $S$ is in $\S\cap \S_4$. The mean of $\xi^S$ is
zero. Hence we can write with the smoothness condition
\eqref{smoothness}
$$\sum_{S\in \overline{\S}_4}\Lambda_{R}(h, \xi^S)=
\sum_{S\in \overline{\S}_4}\Lambda_S(h, \xi^S).$$
Expanding $\xi^S$ again into three terms as in \eqref{eq:30} and considering the terms 
separately, we obtain 
\begin{equation}
\label{eq:31}
\sum_{S\in \S\cap \S_4}
\Lambda_{\overline{S}}( 
{[h]_S}b_{\tau, {\Q_S}, k}, \hat{\mathfrak g})
- \sum_{S\in
  \overline{\S}_4}
\Lambda_S\Big( 
\frac{[h]_S}{ [v]_S}v, \hat{\mathfrak g}\Big)
+ \sum_{S\in {\S'_4}}
\Lambda_{\overline{S}}\Big(
\frac{[h]_S}{ [v]_S}v, \hat{\mathfrak g}\Big).
\end{equation}
Similarly as  above we have to split the sums into $\sum'$ and
$\sum''$. However, in the first and the third sum in \eqref{eq:31}
$\sum'$ denotes summation restricted to the cubes whose parents are contained
in cubes of the first stopping time and $\sum''$ denotes summation
restricted to the cubes whose parents are not contained in any cube of the
first stopping time.

Then we write for each $1\le j\le n$
$$f_j \mathds{1}_{\overline{S}}=f_j \mathds{1}_{S}+\sum_{S'\neq S, \overline{S'}=\overline{S}} f_j \mathds{1}_{S'}$$
and expand the multilinear form correspondingly.
Any term in the expansion which has a $f_j\mathds{1}_{S
'}$ for  some $j$ can be estimated
by the decay condition \eqref{decay} so that we obtain for the penultimate
display the upper bound
$$\Big|\sum_{S\in \S\cap \S_4}\Lambda_{S}([h]_S b_{\tau,\Q_S,k}, \hat{\mathfrak g})\Big|
+C \sum_{S\in \S\cap \S_4} \prod_{\atop{j=1}{j\not=k+1}}^n \|f_j\mathds{1}_{\overline{S}}\|_{p_j}\|\hat{\mathfrak g}_{\overline{S}}\|_q
\le C F\|g\|_q\|h\|_r.$$
Here we have estimated the first term using testing condition
\eqref{testing} with the data $\tau, \Q_S$.

To estimate the  second and third sum in \eqref{eq:31} we add and
subtract $[h]_S b_{\tau,\Q_S,k}$. The sums involving $[h]_S
b_{\tau,\Q_S,k}$ can be estimated in the same way as the first sum in
\eqref{eq:31}. The sums involving 
$[h]_S[v]_S^{-1}v_S-[h]_S b_{\tau,\Q_S,k}$  are estimated similarly to
the case in \eqref{eq:26}. The cubes in
$\overline{\S}_4$ are not pairwise disjoint, but they have bounded
overlap since they are not contained in any cube of $\S_5$ by
construction.  Similarly the cubes in ${\S'_4}$ have bounded
overlap. This completes the proof of \eqref{eq:2}. We are thus reduced to showing \eqref{tildehatbound}.

\subsection{The main estimate}

For a dyadic cube $T\subseteq R$ define the number $\varphi_T$ by 
$$[\hat{\mathfrak g}]_{T}=\varphi_{T} [u]_{T}$$ 
provided $[u]_{T}\neq 0$. If $[u]_{T}= 0$, then necessarily 
$T$ is contained in a stopping cube $P$  of the first stopping time and 
$\hat{\mathfrak g}$ is a multiple of $u$ on $T$ so that we may define
the number $\varphi_{T}$ by
$$\hat{\mathfrak g}_T=\varphi_{T} u_T,$$
and $\varphi_{T}=0$ if $u$ vanishes on $T$.
If $T$ is not contained in a stopping cube $P$ of the first stopping time,
then we obtain an estimate for $\varphi_T$ by expanding $\mathfrak g$ 
as in \eqref{ftildeexpansion} and noting that for stopping time cubes 
$P$ of $\P$ which intersect $T$ and therefore are strictly contained in $T$
the sum of terms in the expansion of $\mathfrak g$ relating to $P$ has vanishing mean
on $T$: 
$$|\varphi_T|
\le C\bigg|\frac{ [g]_{T}}{ [u]_{T}}\bigg|
+C\bigg|\frac{ [g]_{R}}{ [u]_{R}}\bigg|
\le C|Q|^{-1/q}\|g\|_{q}.$$
If $T$ is contained in a stopping cube $P$ of the first stopping time,
then those terms in \eqref{ftildeexpansion} which are
not multiple of $u$ cancel and we have in case $P\in \P\setminus \P_4$
$$|\varphi_T|
\le |[g]_{Q}|
+
\bigg|\frac{ [g]_{P}}{ [u]_{P}}\bigg|
+
\sum_{P'\in \overline{\P}_4\cup {\P'_4}: T\subseteq P'}
\bigg|\frac{ [g]_{P'}}{ [u]_{P'}}\bigg|
\le C|Q|^{-1/q}\|g\|_{q}.$$
Similarly we argue if $P\in \P\cap \P_4$.

Analoguously for a dyadic cube $T\subseteq R$ we define the number $ \psi_T$ by
$$[\mathfrak h]_{T}  = \psi_{T} [v]_{T}
$$
if $[v]_{T}\neq 0$ and by 
$$\mathfrak h_T  = \psi_{T} v_T$$
if $[v]_{T}= 0$ with the additional provision $\psi_T=0$ if
$v_T= 0$.
Similarly as for $\varphi_T$ above we conclude 
$$|\psi_{T}|\le C |R|^{-1/r}\|h_R\|_{r}.$$

Let $N$ be an integer such that the integral kernel of $\Lambda$ 
is constant on all dyadic cubes of length $2^{-N}\ell(R)$, where
$\ell(R)$ denotes the side-length of the cube $R$. It is no harm to
assume all other functions involved are also constant on dyadic cubes
of side-length $2^{-N}\ell(R)$, this can be seen by appropriate limiting process
as $N\to \infty$, none of the estimates below will depend on the specific
choices of $N$.
We write the left-hand side of \eqref{tildehatbound} as

\begin{equation}\label{miniintervals}
\sum_{|T|,|U|=2^{-N}\ell(R)}\Lambda_R( 
 \psi_{T}v_T,
 \varphi_{U}u_{U})
=\sum_{|T|,|U|=2^{1-N}\ell(R)}
\Lambda_R(\psi_{T} 
v_{T} 
,
\varphi_{U}
u_{U})
\end{equation}
$$
-\sum_{|T|,|U|=2^{1-N}\ell(R)}
\Lambda_R\Big( 
\psi_{T} v_T - \sum_{\overline{T'}=T}
\psi_{T'} v_{T'}
,
\varphi_{U} u_{U}\Big)
-\sum_{|T|,|U|=2^{1-N}\ell(R)}
\Lambda_R\Big( 
 \psi_{T}v_T 
,
 \varphi_{U} u_{U}
- \sum_{\overline{U'}=U} \varphi_{U'}u_{U'}
\Big)
$$
$$
+
\sum_{|T|,|U|=2^{1-N}\ell(R)}
\Lambda_R\Big( 
 \psi_{T}v_T 
- \sum_{\overline{T'}=T}\psi_{T'}v_{T'}
,
 \varphi_{U}u_{U}
- \sum_{\overline{U'}=U}\varphi_{U'}u_{U'}\Big).
$$
The function
$$
\psi_{T} v_T - \sum_{\overline{T'}=T}
\psi_{T'} v_{T'}$$
has vanishing mean and is supported on $T$. Hence the smoothness condition \eqref{smoothness} 
turns the second term in the telescoping sum into the diagonal sum
$$
\sum_{|T|=2^{1-N}\ell(R)}
\Lambda_R\Big( 
\psi_{T} v_T - \sum_{\overline{T'}=T}
\psi_{T'} v_{T'}
,
\varphi_{T}u_T\Big).
$$
Similarly the third and fourth term of the telescoping expansion 
turn into diagonal sums. Now we iterate the above telescoping argument.
Since $\mathfrak h$ is supported on $R$ and has integral zero, we may restrict 
the sum to cubes contained in $R$. 
We thus obtain for \eqref{miniintervals}  
\begin{equation}\label{bparaproducts1}
\Lambda_R( \psi_{R} v , \varphi_{R} u) 
\end{equation}
\begin{equation}\label{bparaproducts2}
-\sum_{T\subseteq R}
\Lambda_R\Big( 
 \psi_{T}v_T 
- \sum_{\overline{T'}=T}
 \psi_{T'}v_{T'}
,
  \varphi_{T}
u_{T}\Big)
\end{equation}
\begin{equation}\label{bparaproducts3}
-\sum_{T\subseteq R}
\Lambda_R\Big( 
   \psi_{T}v_T 
,
   \varphi_{T}u_{T}
-   \sum_{\overline{T'}=T}\varphi_{T'}u_{T'}
\Big)
\end{equation}
\begin{equation}\label{bparaproducts4}
+\sum_{T\subseteq R}
\Lambda_R\Big( 
   \psi_{T}v_T 
- \sum_{\overline{T'}=T}
 \psi_{T'}v_{T'}
,
   \varphi_{T}u_{T}
-   \sum_{\overline{T'}=T}\varphi_{T'}u_{T'}
\Big).
\end{equation}
The first term \eqref{bparaproducts1} is estimated by
the testing assumption  \eqref{testing} for the data $\tau, \Q_R$ 
and by the stopping time conditions using that $R$ is not contained
in a stopping time cube of the first stopping time. We obtain that
$$|\Lambda_R( \psi_{R} v , \varphi_{R} u)|
\le C\prod_{j=1}^n\|f_j\mathds{1}_{R}\|_{p_j}
\le C|R||Q|^{-1}\prod_{j=1}^n \|f_j\|_{p_j}.$$
The other terms will be estimated in the next few sections.

\subsection{The estimate for term \eqref{bparaproducts2}}

We write for \eqref{bparaproducts2}
$$
\sum_{T\subseteq R} \Lambda_{R}\Big( \varphi_{T} \big( \psi_{T}v_T -
\sum_{\overline{T'}=T} \psi_{T'}v_{T'}\big) , u\Big)
$$
$$
= \Lambda_{R}\Big( \sum_{T\subseteq R} \varphi_{T}v \big(
\psi_{T}\mathds{1}_T - \sum_{\overline{T'}=T}
\psi_{T'}\mathds{1}_{T'}\big) , u\Big).
$$
In the first step, we have moved the factor $\varphi_{T}$ to the first
entry by bilinearity, and we abandoned the factor $\mathds{1}_T$ in
the second entry thanks to the smoothness condition
\eqref{smoothness}.

Let us define
$$\theta^T=\varphi_{T}
\big( \psi_{T} \mathds{1}_T - \sum_{\overline{T'}=T} \psi_{T'}
\mathds{1}_{T'}\big) $$ and
$$\theta=v \sum_{T\subseteq R} \theta^T.$$
Note that $\theta$ has mean zero. 
By estimate \eqref{upperdualthreshold} from the first stopping time 
we may estimate \eqref{bparaproducts2} by
$$ CF  
 (|R|/|Q|)^{1-1/r} |Q|^{1/q} \| \theta \|_{r}. $$
It remains to show
$$\| \theta\|_{r} \le C|R|^{1/r}|Q|^{-1/r-1/q}\|g\|_{q}\|h\|_{r}
.$$

We may restrict the sum to those $T\subseteq R$ not contained in a
stopping cube of the second stopping time, since the contribution from
cubes $T$ contained in a stopping cube of the second stopping time
vanishes due to the fact $v\theta^T=0$.

The set of such cubes we write as $\T\cup \overline{\P}$, where
$\overline{\P}$ contains those cubes which are parents of stopping
cubes of the second stopping time other than $\overline{\P}_4$, and $\T$ contains
all other cubes, which then are not contained in any stopping cube or
parent in $\overline{\P}$ of any stopping cube of the second stopping
time other than $\overline{\P}_4$.

Let $\P_R$ be the partition of $R$ consisting of all stopping cubes of
the second stopping time, all of which have length at least $2^{-N}\ell(R)$,
and the collection of cubes of side-length $2^{-N}\ell(R)$ not contained in
any of the stopping cubes of the second stopping time.

Let $\tilde{v}$ be such that for every $P\in \P_R$
the function $\tilde{v}$ is constant on $P$ and 
$$\int_P \tilde{v} = \int_P |v|^{r}.$$
Then $\tilde{v}\in L^{\infty}$ and
$$|R| \|\tilde{v}\|_\infty\le C\|v\|_{r}^{r}$$
by the second stopping time construction.  Now splitting the norm
according to the partition $\P_R$ we have
$$\|\theta\|_r^r=\Big\| 
v \sum_{T\in \T\cup \overline{\P}} 
 \theta^T\Big\|_{r}^{r}=\sum_{P\in\P_R}\int_P \tilde{v} 
\Big|\sum_{T\in \T\cup \overline{\P}} 
\theta^T\Big|^{r}\le C \sum_{P\in\P_R}\|\tilde{v}_P\|_\infty  \int_P 
\Big|\sum_{T\in \T\cup \overline{\P}} 
\theta^T\Big|^{r}.$$
We estimate the contributions of $\T$ and $\overline{\P}$ separately. We have
$$\Big\|\sum_{T\in \overline{\P}} \theta^T\Big
\|_{r}\le |R|^{1/r}
\Big\|\sum_{T\in \overline{\P}} \theta^T\Big\|_\infty \le
C |R|^{1/r}\sup_{T\subseteq R}|\varphi_T|\sup_{T\subseteq R}|\psi_T|$$
$$\le C|R|^{1/r} |Q|^{-1/r-1/q}\|g\|_{q}\|h\|_{r}.$$
This is the desired estimate for the $\overline{\P}$-portion of the sum.

Now let  $\hat{v}$ be such that for every $P\in \P_R$
the function $\hat{v}$ is constant on $P$ and 
$$[\hat{v}]_P =[v]_P .$$
Then $\hat{v}\in L^{\infty}$ and  
$$|R| \|\hat{v}\|^{r}_\infty\le C\|v\|_{r}^{r}$$ 
by the stopping time construction. Observe that

$$
\sum_{T\in \T} 
 \theta^T=
\sum_{T\in \T} 
\frac{[\hat{\mathfrak g}]_{T}}{ [u]_{T}}\bigg(
 \frac{[\mathfrak h]_T }{[\hat{v}]_T}\mathds{1}_T - \sum_{\overline{T'}=T}
\frac{[\mathfrak h]_{T'}}{ [\hat{v}]_{T'}}\mathds{1}_{T'}\bigg)
.$$
We expand
\begin{equation}\label{simpleat}
 \frac{[\mathfrak h]_T }{[\hat{v}]_T} - 
\frac{[\mathfrak h]_{T'}}{ [\hat{v}]_{T'}}
=
 \frac{[\mathfrak h]_T -[\mathfrak h]_{T'}}{[\hat{v}]_T} 
+ \frac{[\mathfrak h]_{T'} ([\hat{v}]_{T'}-[\hat{v}]_T)}
{[\hat{v}]_T [\hat{v}]_{T'}}.
\end{equation}
Then for any function $w\in L^{r'}$ such that $\|w\|_{r'}\le 1$ one has
\begin{equation}
  \label{eq:5}
\bigg|\int \sum_{T\in \T} 
\frac{[\hat{\mathfrak g}]_{T}}{ [u]_{T}}\bigg(
 \frac{[\mathfrak h]_T }{[\hat{v}]_T}\mathds{1}_T - \sum_{\overline{T'}=T}
\frac{[\mathfrak h]_{T'}}{ [\hat{v}]_{T'}}\mathds{1}_{T'}\bigg)
w\bigg|  
\end{equation}
\[
=\bigg|\int \sum_{T\in \T} 
\frac{[\hat{\mathfrak g}]_{T}}{ [u]_{T}}\bigg(
 \frac{[\mathfrak h]_T }{[\hat{v}]_T}\mathds{1}_T - \sum_{\overline{T'}=T}
 \frac{[\mathfrak h]_{T'}}{ [\hat{v}]_{T'}}\mathds{1}_{T'}\bigg)
\hat{v}\bigg(
 \frac{[w]_T }{[\hat{v}]_T}\mathds{1}_T - \sum_{\overline{T'}=T}
\frac{[w]_{T'}}{ [\hat{v}]_{T'}}\mathds{1}_{T'}\bigg)\bigg|
\]
\[
\le C\|g\|_{q}|Q|^{-1/q} \sum_{T\subseteq R} \Big(\Delta_T\mathfrak h
\Delta_Tw+E_Tw
\Delta_T\mathfrak h\Delta_T\hat{v} +E_T\mathfrak h \Delta_T\hat{v}\Delta_Tw+E_T\mathfrak h E_Tw(\Delta_T\hat{v})^2\Big)|T|,
\]
where
\begin{equation}
  \label{eq:6}
E_Tf=|[f]_T|\quad \text{and}\quad
\Delta_Tf=\Big(|T|^{-1}\sum_{\overline{T'}=T}|[f]_{T'}-[f]_T|^2|T'|\Big)^{1/2}.  
\end{equation}
Applying Lemma \ref{firststandardlemma} we conclude in view of \eqref{eq:5}
that 
$$\Big\|\sum_{T\in \T} 
 \theta^T\Big\|_r\le C\|g\|_{q}|Q|^{-1/q}\|\mathfrak h\|_{r}\le
C|Q|^{-1/q-1/r}|R|^{1/r}\|g\|_{q}\|h\|_{r}.
$$
This is the desired estimate for the $\T$-portion of the sum.

\subsection{Estimate of the term \eqref{bparaproducts3}}
This term is analoguous to the term \eqref{bparaproducts2}.

\subsection{Estimate of the term \eqref{bparaproducts4}}

We consider \eqref{bparaproducts4}. We may assume that
the sum runs only over those cubes $T$ which are not contained in 
any stopping cube of either of the stopping times, or else one of the
entry functions vanishes.

Let $\mathbf R$ be the set of all maximal cubes from $\P\cup\S$.
Let $\S_R$ be the partition of $R$ consisting of all stopping cubes
from $\mathbf R$, all of which have length at least $2^{-N}\ell(R)$,
and the collection of cubes of side-length $2^{-N}\ell(R)$ not contained in
any of the stopping cubes from $\mathbf R$. Let $\hat{u}$ be the function such that for
every $P\in\S_R
$ the function $\hat{u}$ is constant on $P$ and 
\[
[\hat{u}]_P=[u]_P.
\]
Then $\hat{u}\in L^{\infty}$ and $|R|\|\hat{u}\|_{\infty}^q\le
C\|u_R\|_q^q\le C|R||Q|^{-1}\|u\|_q^q$. Analogously,  let $\hat{v}$ be the function such that for
every $P\in\S_R
$ the function $\hat{v}$ is constant on $P$ and 
\[
[\hat{v}]_P=[v]_P.
\]
Then $\hat{v}\in L^{\infty}$ and $|R|\|\hat{v}\|_{\infty}^r\le
C\|v\|_r^r$.

We may replace the form $\Lambda_R$ by $\Lambda_T$ for fixed $T$ by
using \eqref{smoothness}. We then expand \eqref{bparaproducts4}
 by writing each cube $T$ as union over its  $2^{d}$ children 
and apply this in each component, so that we obtain
\begin{equation}\label{twotondsum}
\sum_{T\subseteq R}\sum_{\overline{T}_1=T}\sum_{\overline{T}_2=T}\ldots \sum_{\overline{T}_n=T}
|\Lambda_T( 
   (\psi_{T} - \psi_{T_k})v_{T_k},
   ( \varphi_{T}-   \varphi_{T_{k+1}})u_{T_{k+1}}
)|\end{equation}
here the $j$-th entry for $j\neq k,k+1$ is $f_j\mathds{1}_{T_j}$,
while the $k$-th and $k+1$-th entry are explicitly given.

We split this sum into the off-diagonal terms, that is the terms for
which $T_j\neq T_i$ for at least one pair $(j,i)$
and the remaining $2^d$ diagonal terms.
The off diagonal terms 
are estimated 
via the decay assumption \eqref{decay} by
$$
C |   \psi_{T} - \psi_{T_k}|\|v_{T_k}\|_{r}
|\varphi_{T}
-   \varphi_{T_{k+1}}|\|u_{T_{k+1}}\|_{q}
\prod_{j\neq k,k+1}\|f_j\mathds{1}_T\|_{p_j}.$$
$$\le CF  |   \psi_{T} - \psi_{T_{k}}|
|\varphi_{T}
-   \varphi_{T_{k+1}}| |T| |Q|^{-1+1/q+1/r}.$$
Observe now that by \eqref{simpleat} we have
\begin{equation}
  \label{eq:8}
\sum_{T\subseteq R}\sum_{\overline{T}_k=T}\sum_{\overline{T}_{k+1}=T}|\psi_{T} - \psi_{T_{k}}|
|\varphi_{T}-   \varphi_{T_{k+1}}| |T|  
\end{equation}
\[
\le C|R|^{1-1/q-1/r}\int\sum_{T\subseteq R}\Big(\sum_{\overline{T'}=T}|\psi_{T} -
\psi_{T'}|^2|T'|\Big)^{1/2}
\Big(\sum_{\overline{T'}=T}
|\varphi_{T}-   \varphi_{T'}|^2 |T'|\Big)^{1/2}|T|^{-1}  |R|^{-1+1/q+1/r}\mathds{1}_T.
\]
Let $s=qr/(q+r)$ and note that the last integral can be controlled from above by
\[
\sup_{\|w\|_{L^{s'}}\le1}\int\sum_{T\subseteq R}\Big(\sum_{\overline{T'}=T}|\psi_{T} -
\psi_{T'}|^2|T'|\Big)^{1/2}
\Big(\sum_{\overline{T'}=T}
|\varphi_{T}-   \varphi_{T'}|^2 |T'|\Big)^{1/2}E_Tw
\]
which in turn can be estimated by
\[
\sup_{\|w\|_{L^{s'}}\le1}\sum_{T\subseteq R} \Big(\Delta_T\mathfrak h\Delta_T\hat{\mathfrak g}+
E_T\hat{\mathfrak g}\Delta_T\mathfrak h\Delta_T\hat{u}+
E_T\mathfrak h\Delta_T\hat{v}\Delta_T\hat{\mathfrak g}+
E_T\mathfrak hE_T\hat{\mathfrak g}\Delta_T\hat{v}\Delta_T\hat{u}\Big)|T|E_Tw.
\]

The diagonal terms are parameterized by $T'$ with $\overline{T'}=T$. We may 
replace $\Lambda_T$ by $\Lambda_{T'}$ since all entry functions
are supported on $T'$.  
We estimate   diagonal terms via 
adding and subtracting a term involving the function 
$b_{\tau,\Q_ {T'},k}$:
\begin{equation}\label{diagterm}  (\psi_{T} - \psi_{T'})( \varphi_{T}-   \varphi_{T'})
\Lambda_{T'}(v,u)
\end{equation}
$$=(\psi_{T} - \psi_{T'})( \varphi_{T}-   \varphi_{T'})
\Lambda_{T'}( {[v]_{T'}}{b}_{\tau,\Q_{T'},k},u)
$$
$$+   (\psi_{T} - \psi_{T'})( \varphi_{T}-   \varphi_{T'})
\Lambda_{T'}( v
-{[v]_{T'}}{b}_{\tau,\Q_{T'},k}, u).
$$
The first term on the right-hand side of \eqref{diagterm}
is estimated via the testing
assumption for $b_{\tau,\Q_{T'},k}$ by 
$$\le C 
 |\psi_{T} - \psi_{T'}|| \varphi_{T}-   \varphi_{T'}|
|[v]_{T'}|\|b_{\tau,\Q_{T'},k}\|_{r}\|u_{T'}\|_{q}\prod_{j\neq k,k+1}\|f_j\mathds{1}_{T'}\|_{p_j}
$$
$$\le CF  |\psi_{T} - \psi_{T'}|| \varphi_{T}-   \varphi_{T'}|
|T||Q|^{-1+1/q+1/r}.
$$
The second term on the right-hand side of \eqref{diagterm}
is estimated via the stopping time condition \eqref{upperdualthreshold}
applied to the cube $T$:
$$\le CF
 |\psi_{T} - \psi_{T'}|| \varphi_{T}-   \varphi_{T'}|
(|T|/|Q|)^{1-1/r}\|v - [v]_{T'}{b}_{\tau,\Q_{T'},k}\|_{r}\|u\|_{q}
$$
$$\le CF  |\psi_{T} - \psi_{T'}|| \varphi_{T}-   \varphi_{T'}|
|T||Q|^{-1+1/q+1/r}.
$$
Observe again that by \eqref{simpleat} we have
\begin{equation}
  \label{eq:7}
\sum_{T\subseteq R}\sum_{\overline{T'}=T}|\psi_{T} - \psi_{T'}|
|\varphi_{T}-   \varphi_{T'}| |T|
\end{equation}
\[
=
|R|^{1-1/q-1/r}\int\sum_{T\subseteq R}\Big(\sum_{\overline{T'}=T}|(\psi_{T} - \psi_{T'})\mathds{1}_{T'}|\Big)
\Big(\sum_{\overline{T'}=T}|(\varphi_{T}-   \varphi_{T'})\mathds{1}_{T'}|\Big)|R|^{1/q+1/r-1}\mathds{1}_R.
\]
The last integral can be controlled by
\[
\sup_{\|w\|_{L^{s'}}\le1}\sum_{T\subseteq
  R}\Big(\sum_{\overline{T'}=T}|\psi_{T} - \psi_{T'}|^2|T'|\Big)^{1/2}
\Big(\sum_{\overline{T'}=T}|\varphi_{T}-
\varphi_{T'}|^2|T'|\Big)^{1/2}E_Tw
\]
which in turns can be dominated by
\[
 C\sup_{\|w\|_{L^{s'}}\le1}\sum_{T\subseteq R} \Big(\Delta_T\mathfrak h\Delta_T\hat{\mathfrak g}+
E_T\hat{\mathfrak g}\Delta_T\mathfrak h\Delta_T\hat{u}+
E_T\mathfrak h\Delta_T\hat{v}
\Delta_T\hat{\mathfrak g}+
E_T\mathfrak hE_T\hat{\mathfrak g}\Delta_T\hat{v}\Delta_T\hat{u}\Big)E_Tw|T|.
\]
Collecting the estimates from \eqref{eq:8} and \eqref{eq:7} and applying Lemma \ref{secondstandardlemma}
we can dominate  \eqref{twotondsum} by
$$\le CF(|R|/|Q|)^{1-1/q-1/r}\|\mathfrak h_{R}\|_{r}\|\mathfrak
g_R\|_{q}
\|\hat{u}_R\|_{\infty}\|\hat{v}_R\|_{\infty}$$
$$\le C F|R||Q|^{-1} \|g\|_q\|h\|_r.$$
This completes the estimation of \eqref{bparaproducts4}.

\subsection{Two standard lemmas via outer measures}

This section contains two standard estimates for martingale sums and differences.
Our purpose will be to reprove these estimates using outer measures
techniques, the use of these outer measure techniques in the context of
$L^p$ estimates in harmonic analysis has been initiated in
\cite{DT}. We present only as much of the material from \cite{DT} as necessary to illustrate our proofs, for more details we refer to \cite{DT}. 

Let $X$ be the subset of the set $\mathcal D$ of all dyadic cubes in $\R^d$
consisting of all dyadic cubes of sidelength at least $2^{-N}$ for suitably
large $N$ and contained in a large compact set of $\R^d$ depending on the
truncation parameters of the form $\Lambda$. All that follows will concern
the collection $X$. 
For a dyadic cube $T$ let $\mathcal D(T)$ denote the set of all dyadic
cubes $T'\subseteq T$. As in \cite{DT} let $\mu$ be the  outer measure
on $X$
generated by the function 
\[
\kappa(\mathcal D(T))=|T|.
\]  
To define the outer measure spaces we have to introduce the so-called
size functions. Namely, for any $p\in[1, \infty)$ and a function $F$
on $\mathcal D$ we define
\[
S_p(F)(\mathcal D(T)):=\Big(|T|^{-1}\sum_{Q\in\mathcal D(T)}|F(Q)|^p|Q|\Big)^{1/p}
\] 
and for $p=\infty$ we set
\[
S_{\infty}(F)(\mathcal D(T)):=\sup_{Q\in\mathcal D(T)}|F(Q)|.
\]
For the size function $S$, which is one of the functions $S_p$ or
$S_{\infty}$ we define the space $L^{\infty}(X, \kappa, S)$ which
consists of all functions $F$ on $X$ such that
\[
\|F\|_{L^{\infty}(X, \kappa, S)}:=\sup_{T\in\mathcal D}|S(F)(\mathcal
D(T))|<\infty.
\] 
In order to define the $L^{p}(X, \kappa, S)$ spaces we
need to introduce the superlevel measure $\mu$ as
\[
\mu(S(F)>\lambda):=\inf\{\mu(\mathcal G): \mathcal G\subseteq \mathcal
D \ \text{and} \  S(F\mathds{1}_{\mathcal D\setminus\mathcal G})\le \lambda\};
\]
this specific definition is the crux of the matter of the theory developed
in \cite{DT}.
Then $L^{p}(X, \kappa, S)$ is the set of all functions $F$ such that
\[
\|F\|_{L^{p}(X, \kappa, S)}:=\bigg(p\int_0^{\infty}\lambda^{p-1}\mu(S(F)>\lambda)d\lambda\bigg)^{1/p}<\infty.
\]
Moreover the weak $L^{p, \infty}(X, \kappa, S)$ is the set of all functions $F$ such that
\[
\|F\|_{L^{p}(X, \kappa, S)}:=\Big(\sup_{\lambda>0}\lambda^{p}\mu(S(F)>\lambda)\Big)^{1/p}<\infty.
\]
For a dyadic cube $T$ and a function $f$ on $\R^d$ define
$$E(f)(T)=E_T f := \left|[f]_T \right|,\quad \text{and}\quad 
\Delta(f)(T)=\Delta_T f :=\Big(|T|^{-1}\sum_{\overline{T'}=T} \left|[f]_{T'}
  -[f]_T\right|^2|T'|\Big)^{1/2}.$$
We now prove the following discrete version of Carleson's embedding theorem.
\begin{thm}
  \label{Caremb}
Let $p\in[1, \infty]$ then there exists a constant $C>0$ such that
for every function $f\in L^p$ we have
\begin{align}
  \label{eq:3}
  \|E(f)\|_{L^{p}(X, \kappa, S_{\infty})}\le C\|f\|_{L^p}
\end{align}
and
\begin{align}
  \label{eq:4}
  \|\Delta(f)\|_{L^{p}(X, \kappa, S_{2})}\le C\|f\|_{L^p}.
\end{align}
\end{thm}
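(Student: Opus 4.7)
The plan is to establish both Carleson embeddings by producing two endpoint bounds, a strong $L^\infty$ bound and a weak-type $(1,1)$ bound, and then interpolating via the Marcinkiewicz theorem for outer $L^p$ spaces from \cite{DT}. The $p=\infty$ case is then the first endpoint, the strong $L^p$ bounds for $p\in(1,\infty)$ come from the interpolation, and the $p=1$ claim is read as the weak-$L^1$ conclusion produced below (which is the best one can expect for a Carleson-type embedding).

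The $L^\infty$ endpoint is immediate. For $E(f)$ the bound $|[f]_Q|\le\|f\|_\infty$ gives $\|E(f)\|_{L^\infty(X,\kappa,S_\infty)}\le\|f\|_\infty$. For $\Delta(f)$ I would invoke Pythagoras for dyadic martingale differences:
\[
\sum_{Q\in\mathcal{D}(T)}|\Delta_Qf|^2|Q|=\sum_{Q'\subsetneq T}|[f]_{Q'}-[f]_{\overline{Q'}}|^2|Q'|=\|f-[f]_T\|_{L^2(T)}^2\le 4|T|\|f\|_\infty^2,
\]
whence $\|\Delta(f)\|_{L^\infty(X,\kappa,S_2)}\le 2\|f\|_\infty$.

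For the weak-$(1,1)$ endpoint at height $\lambda>0$, let $\{T_i\}$ be the maximal dyadic cubes with $|[f]_{T_i}|>\lambda$, and set $\mathcal{G}:=\bigcup_i\mathcal{D}(T_i)$. By disjointness of the $T_i$ and the selection criterion,
\[
\mu(\mathcal{G})\le\sum_i|T_i|\le\lambda^{-1}\|f\|_{L^1}.
\]
For $E(f)$, every $Q\notin\mathcal{G}$ satisfies $|[f]_Q|\le\lambda$ by maximality, which delivers $S_\infty(E(f)\mathds{1}_{\mathcal{D}\setminus\mathcal{G}})\le\lambda$. For $\Delta(f)$ I would run the Calder\'on--Zygmund splitting $f=g+b$ with $g:=f\mathds{1}_{\R^d\setminus\bigcup_iT_i}+\sum_i[f]_{T_i}\mathds{1}_{T_i}$; the usual stopping inequality on parents of the $T_i$ gives $\|g\|_\infty\le 2^d\lambda$. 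The decisive algebraic point is that $\Delta_Qf=\Delta_Qg$ for every $Q\notin\mathcal{G}$: any dyadic child $Q'$ of such a $Q$ either fails to lie inside any $T_j$, so that $[b]_{Q'}=0$, or coincides exactly with some $T_j$, in which case $[b]_{Q'}=[f]_{T_j}-[f]_{T_j}=0$. No intermediate possibility exists, because a dyadic cube $T_j$ with $Q'\subsetneq T_j\subsetneq Q$ cannot exist when $Q'$ is a direct child of $Q$. Applying the orthogonality computation of the first endpoint to $g$ on $T$,
\[
|T|^{-1}\sum_{Q\in\mathcal{D}(T),\,Q\notin\mathcal{G}}|\Delta_Qf|^2|Q|\le|T|^{-1}\|g-[g]_T\|_{L^2(T)}^2\le 4\cdot 4^d\lambda^2,
\]
so $S_2(\Delta(f)\mathds{1}_{\mathcal{D}\setminus\mathcal{G}})\le 2^{d+1}\lambda$, completing the weak bound.

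Interpolating the weak $(1,1)$ and strong $(\infty,\infty)$ endpoints by the outer Marcinkiewicz theorem of \cite{DT} yields the strong $L^p(X,\kappa,S)$ bounds for all $p\in(1,\infty)$, with the $p=\infty$ case already at hand. The only nonstandard step is the identity $\Delta_Qf=\Delta_Qg$ for cubes outside the stopping set, which rests on the rigidity of the dyadic grid; the remainder is routine bookkeeping in the outer-measure framework of \cite{DT}.
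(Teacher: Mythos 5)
Your overall route is exactly the paper's: reduce to the endpoints $p=1$ and $p=\infty$ via the Marcinkiewicz interpolation theorem for outer measure spaces of \cite{DT}, handle $p=\infty$ by the trivial bound for $E$ and by martingale orthogonality for $\Delta$, and handle the weak $(1,1)$ endpoint by a stopping-time/Calder\'on--Zygmund decomposition, exhibiting an exceptional collection $\mathcal G$ that witnesses the superlevel measure. Your reading of the $p=1$ case as a weak-type statement is also consistent with what the paper actually proves at that endpoint.

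There is, however, one concrete step that fails as written: in the weak $(1,1)$ argument you select the stopping cubes by the condition $|[f]_{T_i}|>\lambda$ (absolute value of the signed average), whereas the Calder\'on--Zygmund step for $\Delta(f)$ requires stopping on $[|f|]_{T_i}>\lambda$, as the paper does. With your selection the bound $\|g\|_\infty\le 2^d\lambda$ is false: take $f=M(\mathds{1}_{T}-\mathds{1}_{T'})$ for two sibling cubes $T,T'$ and $M$ large. Then $[f]_{\overline T}=0$ and all larger averages vanish, so $T$ and $T'$ are your maximal stopping cubes, yet $g=[f]_T\mathds{1}_T+[f]_{T'}\mathds{1}_{T'}$ has $\|g\|_\infty=M$, and the cube $Q=\overline T\notin\mathcal G$ carries $\Delta_Q f\sim M$, so your chosen $\mathcal G$ does not satisfy $S_2(\Delta(f)\mathds{1}_{\mathcal D\setminus\mathcal G})\le 2^{d+1}\lambda$. (The maximality of the parent only controls $|[f]_{\overline{T_i}}|$, and $|[f]_{T_i}|\le 2^d[|f|]_{\overline{T_i}}$ is of no use when you have no bound on $[|f|]_{\overline{T_i}}$.) The fix is immediate and costs nothing elsewhere: stop on $[|f|]_{T_i}>\lambda$; this still gives $\sum_i|T_i|\le\lambda^{-1}\|f\|_1$, still dominates the collection needed for $E(f)$ since $|[f]_Q|\le[|f|]_Q$, and restores $\|g\|_\infty\le 2^d\lambda$. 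A smaller cosmetic point: a child $Q'$ of a non-stopping cube $Q$ need not coincide with a stopping cube $T_j$ or avoid them all --- it may strictly contain several --- but your conclusion $[b]_{Q'}=0$ holds in every case because each $T_j$ meeting $Q'$ is contained in it and $b$ has mean zero on each $T_j$.
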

\begin{proof}
  In view of the Marcinkiewicz interpolation theorem for the outer
  measure spaces \cite{DT} it suffices to prove estimates \eqref{eq:3} and
  \eqref{eq:4} for $p=1$ and $p=\infty$. For the proof of \eqref{eq:3}
  note that
\[
\|E(f)\|_{L^{\infty}(X, \kappa, S_{\infty})}=\sup_{T\in\mathcal
  D}\sup_{Q\in\mathcal D(T)}|E_Qf|\le \|f\|_{L^{\infty}}.
\] 
For $p=1$ fix $f\in L^1$ and let $\mathcal F$ be the set of all
maximal cubes $Q$ such that $[|f|]_Q>\lambda$, then by the maximality
of the cubes $Q$ we see that
\[
\sum_{Q\in\mathcal F}|Q|\le \frac{\|f\|_1}{\lambda}.
\] 
This immediately implies that
\[
\mu(S_{\infty}(E(f))>\lambda)\le \mu(\mathcal G)\le\sum_{Q\in\mathcal F}|Q|\le\frac{\|f\|_1}{\lambda}
\]
since $S_{\infty}(E(f)\mathds{1}_{\mathcal D\setminus\mathcal G})\le
\lambda$, where $\mathcal G=\bigcup_{Q\in\mathcal F}\mathcal D(Q)$. This completes the proof of \eqref{eq:3} for $p=1$. 

For the proof of \eqref{eq:4} for $p=\infty$ it is easy to see that
\begin{multline*}
  S_2(\Delta(f))(\mathcal D(T))^2=|T|^{-1}\sum_{Q\in \mathcal D(T)}\sum_{\overline{Q'}=Q}\left|[f]_{Q'}
  -[f]_Q\right|^2|Q'|\\
=|T|^{-1}\sum_{Q\in \mathcal D(T)}\int_Q\Big|\sum_{\overline{Q'}=Q}([f]_{Q'}
  -[f]_Q)\mathds{1}_{Q'}\Big|^2\le|T|^{-1}\int_T|f|^2\le \|f\|_{\infty}^2. 
\end{multline*}
In the case $p=1$ we perform the Calder\'on--Zygmund
decomposition at a height $\lambda>0$. Let 
\[
b=\sum_{Q\in\mathcal F}(f-[f]_Q)\mathds{1}_Q
\] 
be a bad function and let $g=f-b$ be a good function. We see that
$[b]_Q=0$ for every $Q\in\mathcal F$ and $\|g\|_{\infty}\le
2^{d}\lambda$. Finally, we obtain that
\[
\mu(S_2(\Delta(f))>2^d\lambda)\le \mu(\mathcal G)\le
\sum_{Q\in\mathcal F}|Q|\le\frac{\|f\|_1}{\lambda}
\]
since $S_2(\Delta(f)\mathds{1}_{\mathcal D\setminus\mathcal
  G})=S_2(\Delta(g)\mathds{1}_{\mathcal D\setminus\mathcal G})\le
2^d\lambda$. This completes the proof of the theorem. 
\end{proof}

These are norm estimates for the martingale average and the martingale
difference of $f$ on $T$. 
\begin{lem}\label{firststandardlemma}
Let $f_1\in L^p$ and $f_2\in L^{p'}$ with $1<p<\infty$ and $f_3\in L^{\infty}$ such that
$\|f_3\|_{\infty}\le 1$. Assume
we are given coefficients $\alpha_T$ such that for every dyadic cube
$T\subseteq R$ we have
\[
|\alpha_T|\le C\Big(
\Delta_Tf_1
\Delta_Tf_2+E_Tf_2
\Delta_Tf_1\Delta_Tf_3 +E_Tf_1 \Delta_Tf_3\Delta_Tf_2+E_Tf_1 E_Tf_2(\Delta_Tf_3)^2\Big)|T|.
\]
Then there exists a constant $C>0$ such that
\[
\sum_{T\subseteq R} |\alpha_T|\le C\|f_1\mathds{1}_R\|_{p}\|f_2\mathds{1}_R\|_{p'}.
\]
\end{lem}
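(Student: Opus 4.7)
The plan is to view the sum $\sum_{T \subseteq R}|\alpha_T|$ as an outer $L^1$ integral against $\mu$ in the framework of \cite{DT}, and to apply the outer H\"older inequality together with the discrete Carleson embedding Theorem \ref{Caremb} just proved. The pointwise hypothesis on $|\alpha_T|$ writes $\sum_{T \subseteq R}|\alpha_T|$ as a sum of four terms, each a product of building blocks of the form $E_T f_i$ or $\Delta_T f_i$ weighted by $|T|$; this is exactly the setting to which the outer H\"older inequality applies on $(X,\kappa)$ with sizes drawn from $S_\infty$ and $S_2$.

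For the first term $\sum_{T \subseteq R} \Delta_T f_1 \cdot \Delta_T f_2 \cdot |T|$ I would apply outer H\"older with H\"older exponents $p$ and $p'$ and size $S_2$ on both factors. Theorem \ref{Caremb} then bounds $\|\Delta(f_1\mathds{1}_R)\|_{L^p(X,\kappa,S_2)}$ and $\|\Delta(f_2\mathds{1}_R)\|_{L^{p'}(X,\kappa,S_2)}$ by constants times $\|f_1\mathds{1}_R\|_p$ and $\|f_2\mathds{1}_R\|_{p'}$ respectively, which is the claimed bound for this term. For the second term $\sum_{T\subseteq R} E_T f_2 \cdot \Delta_T f_1 \cdot \Delta_T f_3 \cdot |T|$ I would use a three-factor outer H\"older with exponents $(p', p, \infty)$ and sizes $(S_\infty, S_2, S_2)$; the first two factors are controlled by \eqref{eq:3} and \eqref{eq:4}, while the third factor $\|\Delta(f_3)\|_{L^\infty(X,\kappa,S_2)}$ is bounded by $C\|f_3\|_\infty \le C$, again via \eqref{eq:4}. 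The third term is symmetric. For the fourth term $\sum_{T\subseteq R} E_T f_1 \cdot E_T f_2 \cdot (\Delta_T f_3)^2 \cdot |T|$ I would apply a four-factor outer H\"older with exponents $(p, p', \infty, \infty)$ and sizes $(S_\infty, S_\infty, S_2, S_2)$, bounding both $\Delta_T f_3$ factors uniformly in $L^\infty(X,\kappa,S_2)$ by the assumption $\|f_3\|_\infty \le 1$.

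The main technical obstacle is the bookkeeping for outer H\"older with mixed sizes $S_2$ and $S_\infty$: one must verify that on each tent $\mathcal{D}(T)$ the product of the chosen sizes controls the default $L^1$ integration size, which ultimately reduces to the classical H\"older inequality applied inside each tent. Once this is verified for each of the four configurations above, summing the four contributions produces the desired inequality $\sum_{T\subseteq R}|\alpha_T| \le C\|f_1\mathds{1}_R\|_p \|f_2\mathds{1}_R\|_{p'}$.
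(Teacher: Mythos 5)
Your proposal is correct and follows essentially the same route as the paper: the paper also treats each of the four terms by the outer H\"older inequality of \cite{DT} with the same choices of sizes ($S_2$ for the $\Delta_T$ factors, $S_\infty$ for the $E_T$ factors) and exponents, and then concludes by the discrete Carleson embedding of Theorem \ref{Caremb}. Your remark about checking that the product of sizes dominates the $L^1$ integration size on each tent is precisely the content of the outer H\"older inequality, which the paper simply cites from \cite{DT}.
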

\begin{proof}
  By the H\"older's inequality for the outer measure spaces \cite{DT}
  we see that
\[
\sum_{T\subseteq R}\Delta_Tf_1
\Delta_Tf_2|T|\le \|\Delta_Tf_1\|_{L^{p}(X, \kappa, S_2)}
\|\Delta_Tf_2\|_{L^{p'}(X, \kappa, S_2)},
\]
\[
\sum_{T\subseteq R}E_Tf_1 \Delta_Tf_3\Delta_Tf_2|T|\le \|E_Tf_1\|_{L^{p}(X, \kappa, S_{\infty})}
\|\Delta_Tf_2\|_{L^{p'}(X, \kappa, S_2)}\|\Delta_Tf_3\|_{L^{\infty}(X, \kappa, S_2)}
\]
In a similar way we proceed with $E_Tf_1 \Delta_Tf_3\Delta_Tf_2$ and
$E_Tf_1 E_Tf_2(\Delta_Tf_3)^2$. Finally, applying Theorem \ref{Caremb}
we obtain the desired bounds. 
\end{proof}

\begin{lem}\label{secondstandardlemma}
Let $f_1\in L^p$, $f_2\in L^q$, $f_3\in L^{(pq/(p+q))'}$ with $1<p, q<\infty$ and let $f_4,
f_5\in L^{\infty}$ be  such that
 $\|f_4\|_\infty\le 1$, $\|f_5\|_\infty\le 1$. Assume we are given coefficients
$\alpha_T$ such that for every dyadic cube $T\subseteq R$ we have
\[
|\alpha_T|\le C \Big(\Delta_Tf_2\Delta_Tf_1+
E_Tf_1\Delta_Tf_2\Delta_Tf_5+
E_Tf_2\Delta_Tf_4
\Delta_Tf_1+
E_Tf_2E_Tf_1\Delta_Tf_4\Delta_Tf_5\Big)E_Tf_3|T|.
\]
Then
$$\sum_{T\subseteq R}|\alpha_T|\le
\|f_1\mathds{1}_R\|_{L^p}\|f_2\mathds{1}_R\|_{L^{q}}
\|f_3\mathds{1}_R\|_{L^{(pq/(p+q))'}}.$$
\end{lem}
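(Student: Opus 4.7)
The plan is to parallel the proof of Lemma \ref{firststandardlemma}, combining a multifactor H\"older inequality for outer measure $L^p$ spaces (obtained by iterating the two-factor version from \cite{DT}) with the Carleson embedding of Theorem \ref{Caremb}. Let $s=pq/(p+q)$ so that $1/p+1/q+1/s'=1$; this is precisely the balance needed to accommodate the extra factor $E_Tf_3$ on the right-hand side, which did not appear in Lemma \ref{firststandardlemma}.

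I split the bound on $|\alpha_T|$ into its four summands and treat each separately. For the first summand $\Delta_Tf_1\Delta_Tf_2E_Tf_3|T|$, a three-factor outer H\"older with exponents $(p,q,s')$ and sizes $(S_2,S_2,S_\infty)$ yields
$$\sum_{T\subseteq R}\Delta_Tf_1\Delta_Tf_2E_Tf_3|T|\le \|\Delta(f_1)\|_{L^p(X,\kappa,S_2)}\|\Delta(f_2)\|_{L^q(X,\kappa,S_2)}\|E(f_3)\|_{L^{s'}(X,\kappa,S_\infty)},$$
and Theorem \ref{Caremb}, restricted to the tree $\mathcal{D}(R)$, bounds this by $C\|f_1\mathds{1}_R\|_p\|f_2\mathds{1}_R\|_q\|f_3\mathds{1}_R\|_{s'}$. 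For the two cross summands $E_Tf_1\Delta_Tf_2\Delta_Tf_5E_Tf_3|T|$ and $E_Tf_2\Delta_Tf_4\Delta_Tf_1E_Tf_3|T|$, I apply a four-factor outer H\"older with exponents $(p,q,\infty,s')$, distributing the sizes $S_\infty$ to the $E$-factors and $S_2$ to the $\Delta$-factors; the $L^\infty(X,\kappa,S_2)$ norms of $\Delta(f_4)$ and $\Delta(f_5)$ are harmless since by Theorem \ref{Caremb} they are controlled by $\|f_4\|_\infty,\|f_5\|_\infty\le 1$. The fourth summand $E_Tf_1E_Tf_2\Delta_Tf_4\Delta_Tf_5E_Tf_3|T|$ calls for a five-factor outer H\"older with exponents $(p,q,\infty,\infty,s')$, and both $\Delta(f_j)$ factors for $j=4,5$ are absorbed into the constant in the same way.

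I do not expect any real obstacle: the only bookkeeping to carry out is the matching of H\"older exponents with outer sizes so as to land in a valid instance of the iterated outer H\"older inequality, and this is automatic once $s'=(pq/(p+q))'$ is chosen to close the exponent identity. Summing the four contributions and collecting the constants produces the claimed bound.
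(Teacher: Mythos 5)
Your proposal is correct and is exactly what the paper intends: the paper gives no separate proof, stating only that the argument is ``similar to the proof of the previous lemma,'' and that previous proof is precisely the outer H\"older inequality from \cite{DT} applied term by term with sizes $S_2$ on the $\Delta$-factors and $S_\infty$ on the $E$-factors, followed by Theorem \ref{Caremb}. Your choice $s=pq/(p+q)$ closes the exponent identity $1/p+1/q+1/s'=1$ needed to absorb the extra factor $E_Tf_3$, and the $L^\infty(X,\kappa,S_2)$ bounds on $\Delta(f_4),\Delta(f_5)$ are handled exactly as the paper handles $\Delta(f_3)$ in Lemma \ref{firststandardlemma}.
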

The proof of this lemma is similar to the proof of the previous lemma.

\end{document}